\newtheorem{theorem}{Theorem}[section]
\newtheorem{lemma}[theorem]{Lemma}
\newtheorem{corollary}[theorem]{Corollary}
\newtheorem{conjecture}[theorem]{Conjecture}
\def\pr{\text{P}}
\def\ex{\text{E}}
\def\eps{\varepsilon}
\def\cal{\mathcal}
\date{\today}
\begin{document}

\title[Random search tree]{On a random search tree: asymptotic enumeration of vertices by distance from leaves}

\author{Mikl\'os B\'ona}
\address{Department of Mathematics, University of Florida, $358$ Little Hall, PO Box $118105$,
Gainesville, FL, $32611-8105$ (USA)}
\email{bona@ufl.edu}
\author{Boris Pittel}
\address{Department of Mathematics, The Ohio State University, $231$ West $18$-th Avenue, Columbus, Ohio $43210-1175$ (USA)}
\email{bgp@math.ohio-state.edu}

\keywords
{search tree,  root, leaves, ranks, enumeration, asymptotic, distribution,
numerical data}

\subjclass[2010] {05A05, 05A15, 05A16, 05C05, 06B05, 05C80, 05D40, 60C05}

\begin{abstract} A random binary search tree grown from the uniformly random permutation of
$[n]$ is studied. We analyze the exact and asymptotic counts of vertices by  rank, the distance
from the set of leaves. The asymptotic fraction $c_k$ of vertices of a fixed rank $k\ge 0$ is shown to decay exponentially with $k$. Notoriously hard to compute, the exact fractions $c_k$ had been
determined for $k\le 3$ only.  We  computed $c_4$ and
$c_5$ as well; both are ratios of enormous integers, denominator of $c_5$ being $274$
digits long. Prompted by the data, we proved that, in sharp contrast, the largest prime divisor
of $c_k$'s denominator is $2^{k+1}+1$ at most.  We conjecture that, in fact, 
 the prime divisors of every denominator for $k>1$ form a single interval,  from $2$ to the
 largest prime not exceeding $2^{k+1}+1$.
\end{abstract}
\maketitle
\section{Introduction}
\subsection{Background and Definitions}
Various parameters of many models of random rooted trees are fairly well understood {\em if they relate to a near-root part of the tree or to global tree structure }. The first group includes, for instance, the numbers of vertices at given distances from the root, the immediate progeny sizes
for vertices near the top, and so on. See Flajolet and Sedgewick \cite{flajolet} for a comprehensive treatment of these results. The tree height and width are parameters of global nature, see Kolchin \cite{kol},
Devroye \cite{dev}, Mahmoud and Pittel \cite{mahmoudpit}, Pittel \cite{averagerootheight},
Kesten and Pittel \cite{kespit}, Pittel \cite{growtrees}, for instance. {\em Profiles} of random trees have been studied in
\cite{drmota} and \cite{pitman}. In recent years there has
been a growing interest in analysis of the random tree fringe, i. e. the  tree part close to the leaves, 
see Aldous \cite{aldous}, Mahmoud and Ward \cite{mahmoudward1}, \cite{mahmoudward2},
B\'ona \cite{protected}, and Devroye and Janson \cite{janson}. Diversity of models and techniques
notwithstanding, a salient feature of these studies is usage of inherently recursive nature of
the random trees in question. Deletion of the tree root produces a forest of rooted subtrees that are conditionally independent, each being distributed as the random tree for the properly chosen tree size.

Not surprisingly, the technical details of fringe analysis become quite complex as soon
as the focus shifts to layers of vertices further away from the leaves. So while there are
explicit results on the (limiting) fraction of vertices at a fixed, small, distance from the
leaves, an asymptotic behavior of this fraction, as a function of the distance, remained an open problem. In the present paper we will solve this problem for the random 
{\em decreasing binary trees}, known also as {\em binary search trees}. We hope to study
other random trees in a subsequent paper. 

A decreasing binary tree on vertex set $[n]=\{1,2,\cdots ,n\}$ is a binary plane tree in which every vertex has a smaller label
than its parent. Note that this means that the root must have label $n$. Also note that every vertex has at most two 
children, and that every child $v$ is either an left child or a right child of its parent, even if $v$ is the only child of its parent. 

Decreasing binary trees on vertex set $[n]$ are in bijection with permutations of $[n]$. In order to see this, 
let $p=p_1p_2\cdots p_n$ be a permutation. The  decreasing binary tree
 of $p$, which
we denote by $T(p)$, is defined as follows. The root of $T(p)$ is a vertex
 labeled $n$, the largest entry of $p$. 
If $a$ is the largest entry of $p$ on the left of $n$, and $b$ is the largest
entry of $p$ on the right of $n$, then the root will have two children,
the left one will be labeled $a$, and the right one labeled $b$. If $n$ is
the first (resp. last) entry of $p$, then the root will have only one child,
and that is a left  (resp. right) child, and it will necessarily be
labeled $n-1$ as $n-1$ must be the largest of all remaining elements.
Define the rest of $T(p)$ recursively,  by taking $T(p')$ and $T(p'')$, where
$p'$ and $p''$ are the substrings of $p$ on the two sides of $n$, and affixing
them to $a$ and $b$.

\subsection{Recent results} \label{recent}
{\em For the rest of this paper, whenever we say {\em tree}, we will mean a decreasing binary tree. }

If $v$ is a vertex of a  tree $T$, then let the {\em rank} of $v$ be the number of edges in the
shortest  path from $v$ to a leaf of $T$ that is a descendant of $v$. So leaves are of rank 0, neighbors of leaves are of rank 1, and so on. Motivated by a series of recent papers \cite{du}, \cite{mahmoudward1}  concerning the neighbors of leaves, Mikl\'os B\'ona \cite{protected},
proved that for any $k\geq 0$, the probability that a randomly selected vertex of a randomly selected tree is 
of rank $k$ converges to a rational number $c_k$ as $n$ goes to infinity. He also computed 
 that $c_0=1/3$, $c_1=3/10$, $c_2=1721/8100$, and 
$c_3\approx 0.105$. It is worth mentioning that a few months later, Svante Janson and Luc Devroye computed the
same four values of $c_k$ with a completely different method. (The numbers $c_k$ are completely determined {\em
theoretically}, but progressively more difficult to compute as $k$ increases.)
These data show that roughly 95.5 percent of all vertices are of rank at most three, and raises the
very intriguing questions whether $\{c_k\}$ is a probability distribution, and if yes, whether
it is the limiting distribution of the rank of the uniformly random vertex of the tree. We were also
keen to find a way for precise evaluation of the next constants, $c_4$ and $c_5$ at least.

\subsection{Main results}\label{main} In this paper, we are able to answer these questions.
Here are our main results.
\begin{theorem}\label{A} {\bf (i)\/} The equality $\sum_{k\ge 0} c_k=1$ holds, and so $\{c_k\}$ is the probability
distribution of a random variable $R$. {\bf (ii)\/} Let $R_n$ be the rank of the uniformly random vertex of the tree. Then for every $0<\rho<3/2$, $\lim_{n\to \infty}\ex\bigl[\rho^{R_n}\bigr]=
\ex\bigl[\rho^R\bigr]<\infty$. Consequently $R_n\to R$ in distribution, and with all its 
moments, and $c_k=O(q^k)$ for every $0<q<2/3$. {\bf (iii)\/}
Let $R_n^{(1)}, \dots R_n^{(t)}$ be the ranks of the uniformly random $t$-tuple  of vertices of
the tree. Then $(R_n^{(1)}, \dots R_n^{(t)})$ converges in distribution to $(R^{(1)},\dots, R^{(t)})$,
with the components $R^{(j)}$ being independent copies of $R$.
\end{theorem}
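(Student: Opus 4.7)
My plan is to derive parts (i) and (ii) together from a single uniform geometric tail bound on $R_n$. The key observation is combinatorial: if a vertex $v$ has rank at least $k$, then the subtree $T_v$ rooted at $v$ must contain a perfect binary tree of depth $k$, so $|T_v|\ge 2^{k+1}-1$. I would combine this with the classical enumeration of fringe subtrees in a random decreasing binary tree: the expected number of vertices $v$ with $|T_v|=m$ equals $2(n+1)/((m+1)(m+2))$ for $1\le m\le n-1$, which one derives by decomposing at the root, multiplying the recurrence for $S(m,n):=\ex[\#\{v:|T_v|=m\}]$ by $n$, and telescoping against the analogous identity at $n-1$. Summing over $m\ge s$ gives $\ex[\#\{v:|T_v|\ge s\}]\le 2(n+1)/(s+1)$, and applying this with $s=2^{k+1}-1$ produces the crucial tail bound
\[
\Pr(R_n\ge k)\;\le\;2^{1-k}\qquad\text{for every }n.
\]

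From this, part (i) is immediate: B\'ona's earlier work provides the pointwise limit $p_k^{(n)}:=\Pr(R_n=k)\to c_k$, so Fatou's lemma gives $\sum_k c_k\le 1$, while the uniform tail bound gives $\sum_{k\le K}p_k^{(n)}\ge 1-2^{-K}$ for every $n$; letting $n\to\infty$ and then $K\to\infty$ yields $\sum_k c_k\ge 1$. For part (ii), the same bound dominates $\rho^k p_k^{(n)}$ by the summable sequence $2(\rho/2)^k$ whenever $\rho<2$, so dominated convergence on the counting measure yields $\ex[\rho^{R_n}]\to\ex[\rho^R]<\infty$ for every $\rho<2$, in particular for every $\rho<3/2$. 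Convergence of the moment generating function on a neighborhood of $0$ automatically delivers convergence in distribution together with all moments, and the announced geometric decay of $c_k$ follows from finiteness of $\ex[\rho^R]$ at $\rho$ arbitrarily close to $3/2$.

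For part (iii), the plan is to establish concentration: $\operatorname{Var}(X_k^{(n)})=o(n^2)$ for each fixed $k$, where $X_k^{(n)}:=\#\{v:\mathrm{rank}(v)=k\}$. Once this is known, $X_k^{(n)}/n\to c_k$ in probability, and a standard falling-factorial correction to pass from sampling with replacement to sampling without yields
\[
\Pr\bigl(R_n^{(1)}=k_1,\ldots,R_n^{(t)}=k_t\bigr)\longrightarrow\prod_{i=1}^{t}c_{k_i}
\]
for any fixed $(k_1,\ldots,k_t)$. To prove the variance bound, I would exploit the root-split recursion: conditional on the size $L$ of the left subtree of the root, $X_k^{(n)}=X_k^{(L)}+X_k^{(n-1-L)}+\mathbf{1}[\mathrm{rank}(\mathrm{root})=k]$, with the two subtrees being independent BSTs given $L$. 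This yields a recursion for $\ex\bigl[(X_k^{(n)})^2\bigr]$ parallel to the one for $\ex[X_k^{(n)}]$, and subtracting the square of the latter gives a controllable estimate for the variance. The equivalent probabilistic picture: for two uniformly random distinct vertices $v_1,v_2$, the probability that one is an ancestor of the other equals the average depth in the tree divided by $n$, which is $O((\log n)/n)$; conditional on $v_1,v_2$ lying in disjoint subtrees of their lowest common ancestor, those subtrees are independent random BSTs by the exchangeability of the defining permutation, hence the two ranks become asymptotically independent.

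The most delicate step will be closing the variance argument in (iii). One must track not only that two random vertices typically have disjoint subtrees, but also that, conditional on disjointness, the induced pair of subtrees is distributed as two independent BSTs of sizes drawn from the appropriate joint law. This is a consequence of the standard splitting-at-the-root property of BSTs, but turning it into a sharp bound $\operatorname{Var}(X_k^{(n)})=o(n^2)$---controlling the contribution of ancestor pairs and the dependence introduced by conditioning on the LCA---is where the real work lies.
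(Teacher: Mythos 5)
Your argument for parts (i) and (ii) rests on the claim that a vertex $v$ of rank at least $k$ must head a subtree containing a perfect binary tree of depth $k$, hence $|T_v|\ge 2^{k+1}-1$. This is false. Rank $\ge k$ only forces every vertex of $T_v$ at depth less than $k$ to have \emph{at least one} child (since all descendants of $v$ at depth $j<k$ have rank $\ge k-j>0$), and in a decreasing binary tree a non-leaf may have a single child. A subtree that is a path of length $k$ has a root of rank exactly $k$ and only $k+1$ vertices; such fringe subtrees occur with probability $2^k/(k+1)!>0$ among subtrees of size $k+1$. So the deterministic size lower bound fails, the fringe-subtree count $\ex[\#\{v:|T_v|\ge s\}]\le 2(n+1)/(s+1)$ (which is correct, and in fact appears in the paper as the formula for $\ex[Y_{n,\ell}]$) cannot be converted into $\Pr(R_n\ge k)\le 2^{1-k}$ in the way you propose, and with it collapses your proof of $\sum_k c_k\ge 1$, the dominated-convergence step for $\ex[\rho^{R_n}]$, and the claimed range $\rho<2$. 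To repair this along your lines you would have to bound the \emph{probability} that a fringe subtree of size $m$ has root rank $\ge k$, uniformly over $m$, which is essentially the hard part of the problem: the paper does it by bounding $h_n(\rho)=\ex[\rho^{R(\mathrm{root})}]$ via the profile $X_{n,j}$ of leaf depths (using the external-node generating function $\frac{2^j}{j!}(\log\frac{1}{1-x})^j$), a large-deviation bound showing $L_n\ge an$ with overwhelming probability, and then a Cauchy-sequence lemma for the recurrence $x_n=y_n+\frac{2}{n}\sum_{j<n}x_j$; the exponent $3/2$ arises exactly as the threshold where $h_n(\rho)=O(n^{2\rho-2})$ stops being $O(n^{1-\eps})$.

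Your outline for part (iii) is essentially the paper's argument: split pairs of vertices into ancestor/descendant pairs, whose expected number is $O(n\log n)$ by the depth recurrence, and incomparable pairs, for which the root-splitting independence gives $\frac{2}{n}\sum_j E_{j,k_1}E_{n-1-j,k_2}\sim c_{k_1}c_{k_2}n^2/3$ and a telescoping recurrence yields $E_{n,\mathbf k}/n(n-1)\to c_{k_1}c_{k_2}$. That sketch is sound, but as written it is only a plan; the substantive defect of the proposal is the false combinatorial lemma underlying (i) and (ii).
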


The part {\bf (ii)\/} is consistent, broadly, with the conjecture in \cite{protected} stating that the sequence $\{c_k\}$ is log-concave. Focusing exclusively on this sequence we show that the decay of $c_k$ is exactly exponentially fast.

To state the result concisely,  introduce the function $g(\alpha)=
\alpha+\alpha\log(2/\alpha)-1$. The equation $g(\alpha)=0$ has two positive roots. Let $\alpha_0$
denote the smaller root; $\alpha_0\approx 0.373$.
\begin{theorem}\label{B} There exists $\gamma>0$ such that for all $k\ge 1$,
\[
\gamma e^{-k/\alpha_0}\le 1-\sum_{j=0}^{k-1}c_j \le \frac{6k+7}{3}\left(\frac{1}{3}\right)^k.
\]
\end{theorem}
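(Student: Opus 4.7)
The plan is to analyze both inequalities in terms of $\pi_{k,m}$, the probability that the root of a random binary search tree on $m$ vertices has rank at least $k$. Splitting on the left and right subtree sizes $L,\,m{-}1{-}L$ (jointly uniform on $\{0,\ldots,m{-}1\}$), and using that conditionally on these sizes the two subtrees are independent random BSTs, gives $m\pi_{k,m}=2\pi_{k-1,m-1}+\sum_{L=0}^{m-1}\pi_{k-1,L}\pi_{k-1,m-1-L}$ for $k\ge 1$, equivalently the ODE $F_k'=F_{k-1}(F_{k-1}+2)$ with $F_0(z)=z/(1-z)$ for the ordinary generating function $F_k(z)=\sum_{m\ge 1}\pi_{k,m}z^m$. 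Coupling this with the standard fact (a consequence of the self-similarity of random BSTs) that the subtree size $S$ at a uniformly random vertex has limiting distribution $p(m)=2/\bigl((m+1)(m+2)\bigr)$ yields the base identity
\[
1-\sum_{j=0}^{k-1}c_j\;=\;\sum_{m\ge1}p(m)\pi_{k,m}\;=\;2\!\int_0^1(1-z)F_k(z)\,dz,
\]
which after one integration by parts equals $\int_0^1(1-z)^2\bigl[F_{k-1}(z)^2+2F_{k-1}(z)\bigr]\,dz$.

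For the upper bound I plan to proceed by induction on $k$, writing $g_k(z):=(1-z)F_k(z)$ so that the integrals above become $\int g_{k-1}^2\,dz+2\int(1-z)g_{k-1}\,dz$. A first step is the pointwise bound $g_k(z)\le z^{k+1}$ on $[0,1]$ (with equality for $k=0,1$, which can be checked directly); this can be proved by induction using the integral representation $g_k=(1-z)\int_0^z\bigl[g_{k-1}^2/(1-w)^2+2g_{k-1}/(1-w)\bigr]dw$ and its monotone dependence on $g_{k-1}$. Such a pointwise bound alone yields only $1-\sum_{j<k}c_j=O(1/k)$, so the hard step will be extracting a one-step contraction of the shape $\mu_k\le\tfrac13\mu_{k-1}+O(\mu_{k-1}^2)$ (with $\mu_k:=1-\sum_{j<k}c_j$); the constant $1/3$ should arise as the limiting fraction of leaves in a random BST, appearing here as the integrated weight of $(1-z)^2$ against the leading behaviour of $F_{k-1}$. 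Once in hand, iterating this contraction from the base value $\mu_1=2/3$ delivers the claimed estimate $(6k+7)/3^{k+1}$. I expect this contraction step to be the main technical obstacle.

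The lower bound is essentially a direct consequence of the saturation-level asymptotics for random BSTs. Since the rank of the root of a BST equals the minimum depth of a leaf in its subtree, $\pi_{k,m}=\Pr\bigl(\operatorname{sat}(m)\ge k\bigr)$, and a theorem of Devroye (with the constant identified as the smaller root $\alpha_0$ of $g(\alpha)=0$) states that $\operatorname{sat}(m)/\log m\to\alpha_0$ in probability. Thus for every $\epsilon>0$ there is $C=C(\epsilon)$ with $\pi_{k,m}\ge 1-\epsilon$ for all $m\ge Ce^{k/\alpha_0}$. Inserting this into the tail formula and using the $\sim 1/m^2$ tail of $p$,
\[
1-\sum_{j=0}^{k-1}c_j\;\ge\;2(1-\epsilon)\!\!\sum_{m\ge Ce^{k/\alpha_0}}\!\!\frac{1}{(m+1)(m+2)}\;\ge\;\gamma\,e^{-k/\alpha_0}
\]
for a suitable $\gamma>0$, as required. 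In summary, the delicate half of the theorem is the upper bound, whose rate depends on converting the quadratic term $F_{k-1}^2$ in the ODE into a genuine $1/3$-contraction on the tail mass.
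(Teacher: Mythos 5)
Your setup is sound and matches the paper's: the identity $1-\sum_{j=0}^{k-1}c_j=2\int_0^1(1-z)F_k(z)\,dz$ with $F_k'=F_{k-1}^2+2F_{k-1}$, $F_0=z/(1-z)$, is exactly the paper's \eqref{1-ck=int} combined with \eqref{calBk'} (your $F_k$ is their $B_{>k-1}$). But both halves of your argument stop short of, or misjudge, the decisive step. For the upper bound you explicitly defer the contraction $\mu_k\le\tfrac13\mu_{k-1}+O(\mu_{k-1}^2)$, and as stated it is doubtful: since $\pi_{k-1,m}\to 1$ as $m\to\infty$ for fixed $k$, Abel's theorem gives $(1-z)F_{k-1}(z)\to 1$ as $z\uparrow 1$, so the quadratic term satisfies only $\int_0^1(1-z)^2F_{k-1}^2\,dz\le\sup_z\bigl[(1-z)F_{k-1}\bigr]\cdot\int_0^1(1-z)F_{k-1}\,dz\le\tfrac12\mu_{k-1}$ and is certainly not $O(\mu_{k-1}^2)$; near $z=1$ the integrand $(1-z)^2F_{k-1}^2$ tends to $1$, so extracting anything better than a $\tfrac12$-contraction from the Riccati recursion requires controlling the shape of $F_{k-1}$ near the singularity, which is precisely the hard part. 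The paper sidesteps this entirely: it majorizes the root rank by the length of a \emph{size-biased random} root-to-leaf path, whose tail generating functions $P_{>k}$ satisfy the \emph{linear} second-order equation \eqref{lindifGk}; linearity yields the exact two-parameter recurrence \eqref{recur,Ikt} for $I_{k,t}=\int_0^1(1-y)^tP_{>k}(y)\,dy$, from which the factor $\tfrac13$ (coming from $2/(t+2)_2$ at $t=1$) and the bound $(6k+7)3^{-k-1}$ drop out. Without either that linearization or a proof of your contraction, the upper bound is not established.

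The lower bound has a quieter but genuine gap: convergence in probability $\operatorname{sat}(m)/\log m\to\alpha_0$ only gives $\pi_{k,m}\ge 1-\epsilon$ for $m\ge e^{k/(\alpha_0-\delta)}$, and since the tail $\sum_{m\ge M}p(m)\asymp 2/M$ is dominated by its smallest terms, this yields only $1-\sum_{j<k}c_j\ge\gamma_\delta e^{-k/(\alpha_0-\delta)}$ for each $\delta>0$ --- exponentially weaker than $\gamma e^{-k/\alpha_0}$. Your claim that $\pi_{k,m}\ge 1-\epsilon$ already for $m\ge Ce^{k/\alpha_0}$ amounts to the sharp critical statement $\Pr\bigl(\operatorname{sat}(m)\le\alpha_0\log m-O(1)\bigr)=o(1)$ with \emph{no} slack in the constant, which does not follow from the in-probability limit (it is consistent with that limit that $\operatorname{sat}(m)$ sits at $\alpha_0\log m-c\log\log m$, which would cost you a polynomial factor in $k$). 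The paper proves exactly this critical estimate, $\Pr(S_m\le\alpha_0\log m)=o(1)$, by a first-moment computation using the refined asymptotics \eqref{E[Xmmu]} of $\ex[X_{m,\mu}]$ --- in particular the $(\log m)^{-1/2}$ prefactor and the geometric decay of $e^{g(\alpha)\log m}$ as $\alpha$ decreases below $\alpha_0$ --- before running the subtree-counting argument you describe. You need to supply that estimate (or cite a result of that strength) rather than the weak law for $\operatorname{sat}(m)$.
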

\noindent {\bf Note.\/} If  $\lim k^{-1}\log(1/ c_k)$ exists, and we conjecture it does, then this limit is in $[\log 3,1/\alpha_0]$.

We also found a way to simplify computation of the numbers $c_k$ which enabled us to obtain the precise values of $c_4$ and $c_5$, thus going beyond $c_0,\dots,c_3$ determined in \cite{protected} and \cite{janson}. Our numerical results
show that, with high probability, about $99.875$ percent of all vertices are of rank five or less. When written in simplest form,
the numerators and denominators of the rational numbers $c_k$ grow very fast. For instance, the denominator of 
$c_5$ ($denom(c_5)$) has $274$ digits. Despite its enormity, the largest prime divisor of
$denom(c_5)$ is $61$. We conjectured and proved that this remarkable pattern holds for all $k$:
the largest prime divisor of $denom(c_k)$ is at most $2^{k+1}+1$. 
So the $274$-digit denominator of $c_5$ has no prime divisor larger than $65$, i. e.
larger than $61$, which is indeed its prime divisor!  
On the basis of our data, we conjecture that, for $k\ge 2$,  the set of prime divisors of
 $denom(c_k)$ is an {\it uninterrupted\/} interval of primes from $2$ to the largest prime
 divisor, thus (by the prime number theorem) having length $\approx 2^{k+1}/k\log 2$ for large $k$.
That same data makes us believe that the numerator and the denominator of $c_k$ are
comparable in order of magnitude, but the numerator has very few prime factors, with the smallest  one rapidly growing as $k$ increases.

\section{Convergence of the random rank $R_n$}
We start by introducing $E_{n,k}$, the expected number of vertices of
rank $k$. Our focus is on existence and the values
of the limits \[c_k = \lim_{n\rightarrow \infty} \frac{E_{n,k}}{n},\quad k\geq 0.\]
Equivalently, $c_k$ is the limiting probability that $R_n$, the rank of the uniformly random vertex of the (uniformly) random tree is $k$.

The data on $c_k$ that we mentioned in Section \ref{recent}  makes plausible a conjecture that $\{c_k\}$ is actually a probability
distribution, so that there exists a random variable $R$ such that $\pr(R=k)=c_k$  and
$R_n\Rightarrow R$ in distribution. Our first theorem confirms this conjecture with room to spare, 
demonstrating  
that the moment generating function of $R_n$ converges to that of $R$ for any argument
below $3/2$.
\begin{theorem}\label{Rn->R} For every $\rho<3/2$, $\limsup \ex\,[\rho^{R_n}]<\infty$. 
Consequently $\{c_k\}$ is a probability distribution of a random variable $R$ and
$\lim\ex\,[\rho^{R_n}]=\ex\,[\rho^R]$. 
\end{theorem}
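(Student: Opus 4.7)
My plan is to derive an exact identity expressing $\E[\rho^{R_n}]$ as a weighted sum of the moment generating functions $r_n(\rho):=\E[\rho^{M_n}]$ (where $M_n$ is the shortest root-to-leaf distance, i.e.\ the rank of the root), and then to control $r_n(\rho)$ sharply by exploiting that vertex~$1$ is deterministically a leaf of the decreasing binary tree.

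\emph{Exact identity.} Let $F_n(\rho)=n\,\E[\rho^{R_n}]$. Conditioning a uniformly random vertex on whether it is the root (probability $1/n$) or lies in the left/right subtree (whose sizes split uniformly in $\{0,\dots,n-1\}$ and which are, conditionally on sizes, independent random BSTs) yields
\[
F_n(\rho)=r_n(\rho)+\frac{2}{n}\sum_{i=0}^{n-1}F_i(\rho).
\]
A first difference turns this into $nF_n=(n+1)F_{n-1}+n r_n-(n-1)r_{n-1}$; dividing by $n(n+1)$, summing, and using $\frac{1}{n+1}-\frac{n}{(n+1)(n+2)}=\frac{2}{(n+1)(n+2)}$ together with $F_1=r_1(\rho)=1$ produces the explicit formula
\[
\E[\rho^{R_n}]=\frac{r_n(\rho)}{n}+\frac{2(n+1)}{n}\sum_{j=1}^{n-1}\frac{r_j(\rho)}{(j+1)(j+2)}.
\]
Extracting the coefficient of $\rho^k$ suggests the candidate $c_k=2\sum_{j\ge 1}\Pr(M_j=k)/[(j+1)(j+2)]$, and the telescoping $\sum_{j\ge 1}1/[(j+1)(j+2)]=1/2$ would then give $\sum_k c_k=1$ --- provided both the convergence of the sum over $j$ and the interchange of limit and sum can be justified, which reduces everything to a bound on $r_n(\rho)$.

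\emph{Bounding $r_n(\rho)$.} Since every non-root has a strictly smaller label than its parent, vertex~$1$ has no children and is always a leaf of the tree; if $D_n$ denotes its depth, then $M_n\le D_n$, so $r_n(\rho)\le\psi_n(\rho):=\E[\rho^{D_n}]$ for every $\rho\ge 1$. The variable $D_n$ is transparent: letting $S$ be the size of the subtree of the root containing vertex~$1$, conditioning on the position of the root $n$ in the defining permutation and using the independence of the subset from the order gives $\Pr(S=s)=\frac{2s}{n(n-1)}$ for $1\le s\le n-1$, whence
\[
\psi_n(\rho)=\rho\sum_{s=1}^{n-1}\frac{2s}{n(n-1)}\psi_s(\rho),\qquad \psi_1(\rho)=1.
\]
Setting $A_n:=\sum_{s=1}^{n}s\psi_s(\rho)$ converts this into the first-order recurrence $A_n=A_{n-1}(n-1+2\rho)/(n-1)$ with $A_1=1$, from which $A_n=\binom{n-1+2\rho}{n-1}$ and, by Stirling's formula,
\[
\psi_n(\rho)=\frac{\Gamma(n+2\rho-1)}{\Gamma(2\rho)\,n!}\;\sim\;\frac{n^{2\rho-2}}{\Gamma(2\rho)}.
\]

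\emph{Conclusion and the $3/2$ threshold.} For $\rho<3/2$ we have $2\rho-2<1$, so $r_n(\rho)=O(n^{2\rho-2})=o(n)$ and $\sum_{j\ge 1}r_j(\rho)/[(j+1)(j+2)]<\infty$. Substituting into the identity gives both the uniform bound $\sup_n\E[\rho^{R_n}]<\infty$ and the limit $L(\rho):=\lim_n\E[\rho^{R_n}]=2\sum_{j\ge 1}r_j(\rho)/[(j+1)(j+2)]$; picking any $\rho'\in(\rho,3/2)$, the elementary estimate $\sum_{k>K}\Pr(R_n=k)\rho^k\le(\rho/\rho')^K\sup_n\E[(\rho')^{R_n}]$ supplies the uniform integrability that identifies $L(\rho)=\sum_k c_k\rho^k=\E[\rho^R]$, while $L(1)=1$ confirms that $\{c_k\}$ is a genuine probability distribution. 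The heart of the argument, and the reason the threshold is exactly $3/2$, is the power-law bound $\psi_n(\rho)\sim n^{2\rho-2}$ with an exponent strictly less than $1$ precisely when $\rho<3/2$; any weaker estimate (for instance, one using only $M_n=\Theta(\log n)$ with high probability) would fail to make $\sum r_n(\rho)/n^2$ converge for $\rho$ near $3/2$, and only the closed form for $\psi_n$ --- accessible because vertex~$1$ is deterministically a leaf --- delivers this sharpness.
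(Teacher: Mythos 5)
Your argument is correct, and it reaches the theorem by a genuinely different route at the one place where the paper has to work hard. Both proofs reduce to the same recurrence for $n\ex[\rho^{R_n}]$ driven by the root-rank generating function $h_n(\rho)=\ex\bigl[\rho^{R(root)}\bigr]$, and both need $h_n(\rho)=O\bigl(n^{2\rho-2}\bigr)$ to reach the threshold $3/2$. The paper gets this bound by writing $\rho^{R(root)}\le\bigl(\sum_j\rho^jX_{n,j}\bigr)/L_n$, invoking the external-profile generating function of Mahmoud and Pittel to bound the numerator by $O(n^{2\rho-1})$, and then proving a large-deviation lower bound for the leaf count $L_n$ via a bivariate generating function and a Chernoff argument, glued together with Cauchy--Schwarz. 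You replace all of that with the observation that vertex $1$ is deterministically a leaf, so $R(root)\le D_n$, together with the exact evaluation $\ex[\rho^{D_n}]=\Gamma(n+2\rho-1)/(\Gamma(2\rho)\,n!)\sim n^{2\rho-2}/\Gamma(2\rho)$ coming from the size-biased recursion $\Pr(S=s)=2s/(n(n-1))$; I checked the recursion, the closed form, and the identity for $\ex[\rho^{R_n}]$ (it returns $1$ at $\rho=1$, as it must), and all are right. This is shorter, self-contained, and even pins down the constant. A second difference: where the paper proves a Cauchy-sequence lemma and then a separate limiting argument to get $\sum_k c_k=1$, you solve the recurrence in closed form, which yields the explicit representation $c_k=2\sum_{j\ge1}\Pr(M_j=k)/((j+1)(j+2))$ and makes $\sum_kc_k=1$ a one-line telescoping plus Tonelli. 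What the paper's heavier route buys is reusability: the leaf-count generating function and the profile bounds reappear later (Lemma \ref{Lnk>epsn}, the estimate \eqref{(log n+1)^{k-1}}, and the proof of Theorem \ref{exp,lower}), so they are not wasted there. Two small points to make explicit in a write-up: the inequality $\rho^{R(root)}\le\rho^{D_n}$ needs $\rho\ge1$ (the case $\rho<1$ being trivial for boundedness), and the identification of the limit as $\ex[\rho^R]$ should first record the indicator version of your identity, $\Pr(R_n=k)\to 2\sum_j\Pr(M_j=k)/((j+1)(j+2))=c_k$, before the uniform-integrability step.
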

\begin{proof}
Let $p_{n,k}$ be the probability that the root is
of rank $k$. Then, for $n>1$, 
\begin{equation} \label{recurrence} 
E_{n,k} = p_{n,k} + \frac{1}{n} \sum_{j=0}^{n-1} \left( E_{j,k} + E_{n-1-j,k} \right), 
\end{equation}
Indeed, the above formula just adds the expected value of indicator of the event ``root is
of rank  $k$'' to the expected total count 
of the non-root vertices of rank $k$, the latter being first computed for trees in which the left subtree of the root is of size $j$. The existence of $c_k:=\lim E_{n,k}/n$, rational or not, will follow immediately from the next lemma. 
\begin{lemma}\label{limexists} Let $\{x_n\}$, $y_n$  be such that 
$y_n=O(n^{1-\eps})$, $(\eps>0)$, and
\begin{equation*}
x_n=y_n+\frac{1}{n}\sum_{j=0}^{n-1}(x_j +x_{n-1-j}),\quad n>1.
\end{equation*}
Then there exists a finite $\lim_{n\to\infty}x_n/n$.
\end{lemma}
\begin{proof} First of all, \eqref{recurrence} is equivalent to
\begin{equation*}
x_n=y_n+\frac{2}{n}\sum_{j=0}^{n-1}x_j,\quad n>1.
\end{equation*}
Standard manipulation shows then that 
\begin{equation}\label{stand}
nx_n-(n+1)x_{n-1}=ny_n-(n-1)y_{n-1},\quad n>1,
\end{equation}
or 
\begin{align*}
\frac{x_n}{n+1}-\frac{x_{n-1}}{n}&=\frac{y_n}{n+1}-\frac{y_{n-1}}{n}\frac{n-1}{n+1}\\
&=\frac{y_n}{n+1}-\frac{y_{n-1}}{n}+O\bigl(n^{-1-\eps}\bigr).
\end{align*}
Telescoping, we obtain: for $1<m<n$,
$$
\frac{x_n}{n+1}-\frac{x_m}{m+1}=\frac{y_n}{n+1}-\frac{y_m}{m+1}+O(m^{-\eps})=O(m^{-\eps}).
$$
Thus $\{x_n/(n+1)\}$ is a fundamental Cauchy sequence, whence there exists a finite $\lim_{n\to\infty} x_n/(n+1)$, and so does $\lim_{n\to\infty}x_n/n$.
\end{proof}
Since $p_{n,k}=O(1)$, the conditions of Lemma \ref{limexists}
obviously hold for $x_n=E_{n,k}$ and $y_n=p_{n,k}$ with $\eps\in (0,1]$. Consequently, for each $k\ge 0$,
there exists a finite limit $c_k:=\lim E_{n,k}/n$. Further,
\begin{equation}\label{sumck}
\sum_k \frac{E_{n,k}}{n}=1\Longrightarrow \sum_k c_k\le 1.
\end{equation}

Next, given $\rho>1$, introduce
\[
{\cal H}_n(\rho)=\sum_{k\le n-1} \rho^k E_{n,k}, 
\]
the expected value of $\sum_{v\in [n]} \rho^{R(v)}$, $R(v)$ denoting
the rank of a generic vertex $v$. Then, analogously to \eqref{recurrence},
\begin{equation}\label{calHtnrecur}
\cal H_n(\rho) =h_n(\rho) + \frac{1}{n} \sum_{j=0}^{n-1} \left( \cal H_j(\rho) + \cal H_{n-1-j}(\rho) \right), \quad n>1,
\end{equation}
where $h_n(\rho)=\ex\bigl[\rho^{R(root)}\bigr]$. How large are $h_n(\rho)$ and $\cal H_n(\rho)$?

Let $X_{n,j}$ denote the random number of leaves at (edge) distance $j$ from the root; $L_n=\sum_jX_{n,j}$ is the total number of leaves. Then
\begin{equation}\label{hn(rho)<}
\rho^{R(root)}\le \frac{\sum_j \rho^jX_{n,j}}{L_n}\Longrightarrow h_n(\rho)\le\ex\left[\frac{\sum_j \rho^jX_{n,j}}{L_n}\right].
\end{equation}
We will  show that  $L_n$ is of order $n$ so it is  likely that $h_n(\rho)$ is at most
of order $n^{-1}\sum_j \rho^j \ex[X_{n,j}]$. So let us bound $\sum_j \rho^j \ex[X_{n,j}]$. To do so, attach to  the random tree ``external'' vertices so that
every vertex of the tree itself  has exactly two descendants; thus every leaf $\ell$ gets two
external descendants, and every non-leaf vertex of the tree with one (left/right) descendant gets 
an additional  external (right/left) descendant. Let $\cal X_{n,j}$ denote the total number of
external nodes at distance $j$ from the root. It was shown in \cite{mahmoudpit} that 
\begin{equation}\label{calLj(x)}
\cal L_j(x): =\sum_{n\ge 1}\ex[\cal X_{n,j}]x^n=\frac{2^j}{j!}\left(\log\frac{1}{1-x}\right)^j,\quad j>0.
\end{equation} 
Introduce $L_j(x)=\sum_{n\ge 0}x^n\ex[X_{n,j}]$; so $L_0(x)= x$. Arguing as in \cite{mahmoudpit}, it can be shown that, for $j\ge 2$,
\[
\frac{d L_j(x)}{dx}=\frac{2}{1-x}\,L_{j-1}(x).
\]
Notice that $[x^n] L_0(x)\leq [x^n] \log\tfrac{1}{1-x}$ for every $n\ge 0$.  By induction on $j$, it follows that, for $j>0$,
\begin{equation}\label{Lj(x)}
 \ex[X_{n,j}]=[x^n] L_j(x)\leq \frac{2^j}{j!}[x^n]\left(\log\frac{1}{1-x}\right)^j=\ex[\cal X_{n,j}].
\end{equation} 

Therefore, for every $r>0$,
\begin{align*}
\sum_j r^j \ex[X_{n,j}]&=[x^n]\sum_{j\ge 0}r^jL_j(x) \le [x^n]\sum_{j\ge 0}r^j\cal L_j(x)\\
&=[x^n]\sum_{j\ge 0}\frac{(2r)^j}{j!}\left(\log\frac{1}{1-x}\right)^j=[x^n]\exp\left[2r \log\frac{1}{1-x}\right]\\
&=[x^n] (1-x)^{-2r}=\binom{n+2r-1}{n}=\frac{\Gamma(n+2r)}{\Gamma(n)\Gamma(2r)}\\
&=O\bigl(n^{2r-1}\bigr),
\end{align*}
the last equality following from the Stirling formula for the Gamma function. Thus, for $r>0$,
\begin{equation}\label{Hrhon<}
\sum_j r^j \ex[X_{n,j}]=O\bigl(n^{2r-1}\bigr).
\end{equation}
Consequently for the numerator in the bound \eqref{hn(rho)<} of  $h_n(\rho)$
we have
\[
\ex\left[\sum_j\rho^jX_{n,j}\right]=O\bigl(n^{2\rho-1}\bigr).
\]
It remains to show that  the denominator $L_n$ in \eqref{hn(rho)<} is quite likely to be
of order $n$, so that $h_n(\rho)=O\bigl(n^{2\rho-1}/n\bigr)=O\bigl(n^{2\rho-2})$. To be more specific,
since $\ex[L_n]=(n+1)/3$, \cite{protected}, we should expect that
$\pr(L_n< an)$ is very small if $a<1/3$.

\begin{lemma}\label{Xn} If $x\in (0,1]$ and $y\in (0, y(x))$, 
\begin{equation}\label{y(x):=}
y(x):=(2\sqrt{1-x})^{-1}\log\tfrac{1+\sqrt{1-x}}{1-\sqrt{1-x}},
\end{equation}
then, setting $L_0=0$,
\begin{equation}\label{bivariate}
\sum_{n\ge 0}y^n \ex\bigl[x^{L_n}\bigr]=\sqrt{1-x}\,\frac{1+e^{2\sqrt{1-x}y}
\tfrac{1-\sqrt{1-x}}{1+\sqrt{1-x}}}{1-e^{2\sqrt{1-x}y}
\tfrac{1-\sqrt{1-x}}{1+\sqrt{1-x}}}.
\end{equation}
\end{lemma}
\begin{proof} Since for $n>1$
\[
\ex\bigl[x^{L_n}\bigr] = \frac{1}{n}\sum_{k=0}^{n-1}\ex\bigl[x^{L_k}\bigr]\ex\bigl[x^{L_{n-1-k}}\bigr],
\]
we obtain
\begin{equation}\label{diff,F1}
\begin{aligned}
\frac{\partial}{\partial y}\sum_{n\ge 0}y^n \ex\bigl[x^{L_n}\bigr]&=x+\sum_{n\ge 2}y^{n-1}
\sum_{k=0}^{n-1}\ex\bigl[x^{L_k}\bigr] \ex\bigl[x^{L_{n-1-k}}\bigr]\\
&=\left(\sum_{n\ge 0}y^n \ex\bigl[x^{L_n}\bigr]\right)^2 - (1-x).
\end{aligned}
\end{equation}
Integrating and using $\left.\sum_{n\ge 0}y^n \ex\bigl[x^{L_n}\bigr]\right|_{y=0}=1$, we obtain 
\eqref{bivariate},
provided that the denominator in \eqref{bivariate} is positive, a condition equivalent to $y<y(x)$.
\end{proof}
\begin{corollary}\label{PXn<an} Let $a<1/3$. For $\delta\in (0,1)$,
\[
\pr(L_n <an)\le \exp\bigl(-(1/3-a )n^{1-\delta}/2\bigr).
\]
\end{corollary}
\begin{proof} We start with a Chernoff-type bound
\begin{equation}\label{Chernoff}
\pr(L_n<an)\le x^{-an} y^{-n} \sum_{\nu\ge 0}y^{\nu} \ex\bigl[x^{L_{\nu}}\bigr],\quad \forall\, x<1,\,
y<y(x).
\end{equation}
Choose $x=\exp\bigl(-n^{-\delta}\bigr)$; then 
\[
y(x)=1+\frac{1}{3n^{\delta}}+O(n^{-2\delta}),
\]
so we may choose $y=\exp\bigl(b n^{-\delta}\bigr)$, $b=(a+1/3)/2$. Using \eqref{Chernoff}
and \eqref{bivariate}, it follows that
\[
\pr(L_n<an)=O\bigl[\exp(an^{1-\delta}-bn^{1-\delta})\bigr]=O\bigl[\exp(-(1/3-a)n^{1-\delta}/2\bigr].
\]
\end{proof}

Armed with the corollary,  we return to \eqref{hn(rho)<}. By Cauchy-Schwartz inequality,
\[
\sum_j\rho^jX_{n,j}=\sum_{\ell} \rho^{|\cal P(\ell)|}\le X_n^{1/2} 
\left(\sum_{\ell}\rho^{2|\cal P(\ell)|}\right)^{1/2}\le n^{1/2}\left(\sum_j \rho^{2j}X_{n,j}\right)^{1/2}.
\]
Therefore, applying Cauchy-Schwartz inequality again and using \eqref{Hrhon<},
\begin{align*}
\ex\left[\bold 1_{\{X_n\le an\}}\sum_j\rho^jX_{n,j}\right]&\le n^{1/2}\left(\ex\bigl[\bold 1_{X_n\le an}\bigr]
\right)^{1/2}\left(\ex\left[\sum_j\rho^{2j}X_{n,j}\right]\right)^{1/2}\\
&=n^{1/2} \pr^{1/2}(X_n\le an)\left(\sum_j\rho^{2j}\ex[X_{n,j}]\right)^{1/2}\\
&=O\bigl[n^{1/2} n^{(2\rho^2-1)/2}\pr^{1/2}(X_n\le an)\bigr]\\
&=O\bigl[n^{\rho^2}\pr^{1/2}(X_n\le an)\bigr]\
\end{align*}
Using the bound \eqref{Hrhon<} with $\rho^2$ instead of $\rho$ and Corollary \ref{PXn<an},
we obtain then
\[
\ex\left[\bold 1_{\{X_n\le an\}}\sum_j\rho^j X_{n,j}\right]= O\bigl(n^{\rho^2} \exp(-(1/3-a)n^{1-\delta}/4)
\bigr)=o(1).
\]
Therefore, by \eqref{hn(rho)<} and \eqref{Hrhon<},
\begin{equation}\label{hn(rho)<expl}
\begin{aligned}
h_n(\rho) &\le \ex\left[\bold 1_{\{X_n<an\}}\sum_j\rho^jX_{n,j}\right]+ \frac{1}{an}\sum_j\rho^j \ex[X_{n,j}]\\
&=o(1)+O\bigl(n^{2\rho-2}\bigr).
\end{aligned}
\end{equation}
\begin{lemma}\label{limH(t)n/nexists} For every fixed $\rho<3/2$, there exists a finite
$\lim_{n\to\infty}n^{-1}\cal H_n(\rho)$. Consequently $\sum_{k\ge 0} c_k=1$, 
$\sum_{k\ge 0} \rho^k c_k <\infty$, and so $c_k=o(\rho^{-k})$.
\end{lemma}
\begin{proof}  By \eqref{Hrhon<} and \eqref{hn(rho)<expl}, $x_n:=\cal H_n(\rho)$ and $y_n:=
h_n(\rho)$ satisfy the condition of Lemma \ref{limexists} with $\eps\in (0,3-2\rho)$. Hence, there exists a finite 
\[
\lim_{n\to\infty}n^{-1}\cal H_n(\rho)=\lim_{n\to\infty}n^{-1}\ex\left[\sum_{v\in [n]}\rho^{R(v)}\right]=
\lim_{n\to\infty}n^{-1}\!\!\!\sum_{k\le n-1}\!\!\!\rho^kE_{n,k}.
\]
Since $n^{-1}\sum_{k\le n-1}E_{n,k}=1$,  and there exists $c_k=\lim_{n\to\infty}n^{-1}E_{n,k}$,
($k\ge 0$), we conclude that $\sum_k c_k=1$, and 
\[
\lim_{n\to\infty}n^{-1}\!\!\!\sum_{0<k\le n-1}\!\!\!\!\rho^kE_{n,k}=\sum_{k\ge 0} \rho^k c_k <\infty.
\]
\end{proof}
From Lemma \ref{limH(t)n/nexists} it follows that $R_n$, the rank $R(v)$ of the uniformly random 
vertex $v$, converges in distribution to $R$, ($\pr(R=k)=c_k$, $k\ge 0$) fast enough for $\ex[\rho^{R_n}]$
to converge to $\ex[\rho^R]$ if $\rho<3/2$. The proof of Theorem \ref{Rn->R} is
complete.
\end{proof}
Next we will show that the ranks of a finite ordered tuple of the random vertices are mutually independent in the limit $n\to\infty$. 
\begin{theorem}\label{mutind} Let $t>1$ be fixed. For an ordered, fixed, $t$-tuple $\bold k=(k_1,\dots,k_t)$, let $p_n(\bold k)$ denote the probability that the uniformly random $t$-tuple of
vertices $(v_1,\dots,v_t)$ have ranks $R(v_1)=k_1,\dots,R(v_t)= k_t$. Then $\lim_{n\to\infty}p_n(\bold k)=\prod_{j=1}^t c_{k_j}$.
\end{theorem}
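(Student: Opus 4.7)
The plan is to proceed by induction on $t$, the base case $t=1$ being furnished by Theorem \ref{Rn->R}. Write $M_n(\mathbf{k})$ for the expected number of ordered tuples $(v_1,\dots,v_t)$ of \emph{distinct} vertices with $R(v_j)=k_j$ for all $j$, so that $p_n(\mathbf{k})=M_n(\mathbf{k})/(n)_t$ where $(n)_t:=n(n-1)\cdots(n-t+1)$. Since $(n)_t\sim n^t$, it suffices to establish $\lim n^{-t}M_n(\mathbf{k})=\prod_{j=1}^t c_{k_j}$.

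Mimicking the derivation of \eqref{recurrence}, I would condition on the size $j$ of the left subtree of the root; given this size, the left and right subtrees are independent uniformly random decreasing binary trees of sizes $j$ and $n-1-j$. Classifying each index $i$ according to whether $v_i$ lands at the root, in the left subtree, or in the right subtree yields a recursion of the form
\[
M_n(\mathbf{k})=\frac{1}{n}\sum_{j=0}^{n-1}\sum_{I_L\sqcup I_R=[t]}M_j(\mathbf{k}_{I_L})\,M_{n-1-j}(\mathbf{k}_{I_R})+\Delta_n(\mathbf{k}),
\]
where $\mathbf{k}_I$ is the subvector indexed by $I$ and $\Delta_n(\mathbf{k})$ collects the configurations in which some $v_i$ is the root. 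In every such configuration the remaining $t-1$ vertices live among the $n-1$ non-root vertices, so $\Delta_n(\mathbf{k})\le t\,(n-1)_{t-1}=O(n^{t-1})=o(n^t)$.

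For each summand in the main term I would invoke the inductive hypothesis $M_j(\mathbf{k}_I)/j^{|I|}\to c_{\mathbf{k}_I}:=\prod_{i\in I}c_{k_i}$, together with the trivial uniform domination $M_j(\mathbf{k}_I)\le (j)_{|I|}\le j^{|I|}$, and pass to the limit by a Riemann-sum (bounded convergence) argument:
\[
\frac{1}{n^{t+1}}\sum_{j=0}^{n-1}M_j(\mathbf{k}_{I_L})M_{n-1-j}(\mathbf{k}_{I_R})\longrightarrow c_{\mathbf{k}_{I_L}}\,c_{\mathbf{k}_{I_R}}\int_0^1 x^{|I_L|}(1-x)^{|I_R|}\,dx.
\]
The crucial algebraic observation is that $c_{\mathbf{k}_{I_L}}c_{\mathbf{k}_{I_R}}=\prod_{i=1}^t c_{k_i}$ for \emph{every} partition, so summing over the $2^t$ partitions of $[t]$ produces
\[
\prod_{i=1}^t c_{k_i}\cdot\sum_{s=0}^t\binom{t}{s}B(s+1,\,t-s+1)=\prod_{i=1}^t c_{k_i},
\]
since $\binom{t}{s}B(s+1,t-s+1)=1/(t+1)$ for each $s$. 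Combined with $(n)_t\sim n^t$, this would give $p_n(\mathbf{k})\to\prod c_{k_j}$.

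The step I expect to be the main obstacle is the passage to the Riemann-sum limit, since the convolution is sensitive to the endpoints $j\approx 0$ and $j\approx n$ where the inductive hypothesis is not directly applicable; however, the crude pointwise bound $M_j(\mathbf{k}_I)/j^{|I|}\le 1$ dominates the rescaled integrand, so bounded convergence handles the limit without requiring any quantitative rate in the inductive hypothesis.
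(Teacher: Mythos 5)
Your overall strategy --- condition on the size of the left subtree, classify the $t$ chosen vertices by which subtree they fall into, exploit the conditional independence of the two subtrees, and dispose of degenerate configurations with a crude $O(n^{t-1})$ bound --- is the same as the paper's (which writes out only $t=2$, splitting off the pairs in which one vertex is a descendant of the other as an $O(n\log n)$ error term). However, your argument as written has a genuine gap, and it is not the step you flagged. The problem is the two partitions $I_L=[t]$ and $I_R=[t]$: for these the summand is $M_j(\mathbf{k})\cdot M_{n-1-j}(\emptyset)=M_j(\mathbf{k})$ (resp.\ $M_{n-1-j}(\mathbf{k})$), so your recursion contains the self-referential term $\tfrac{2}{n}\sum_{j=0}^{n-1}M_j(\mathbf{k})$. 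The induction is on $t$, so the inductive hypothesis says nothing about $M_j(\mathbf{k}_I)$ when $|I|=t$, and you cannot pass to the limit by bounded convergence in those two terms: the pointwise limit $M_j(\mathbf{k})/j^{t}\to\prod_i c_{k_i}$ is precisely the statement being proved. Note that your closing identity $\sum_{s=0}^{t}\binom{t}{s}B(s+1,t-s+1)=1$ only equals $1$ because it includes the endpoints $s=0$ and $s=t$; the proper partitions alone contribute $\tfrac{t-1}{t+1}\prod_i c_{k_i}$, and the missing $\tfrac{2}{t+1}\prod_i c_{k_i}$ must come from the self-referential term, so the computation is circular as it stands. (The endpoint issue $j\approx 0$, $j\approx n$ that you singled out is indeed harmless for the proper partitions, for exactly the domination reason you give.)

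What is missing is a proof that the recursion $x_n=y_n+\tfrac{2}{n}\sum_{j<n}x_j$ with $y_n=\beta n^{t}+o(n^{t})$ forces $x_n=\beta\tfrac{t+1}{t-1}\,n^{t}+o(n^{t})$ (here $t\ge 2$, so $t-1>0$; with $\beta=\tfrac{t-1}{t+1}\prod_i c_{k_i}$ supplied by the proper partitions this yields $x_n/n^{t}\to\prod_i c_{k_i}$ and closes the induction). This is exactly where the paper does its real work: it sandwiches $E_{n,\mathbf{k}}^{\prime}$ between explicit super- and sub-solutions ${\cal E}^{\pm}_{n,\mathbf{k}}$ built from $b_n^{\pm}=\tfrac{n^2}{3}c_{k_1}c_{k_2}\pm\eps n^2\pm A$, solves these exactly via the telescoping identity $nx_n-(n+1)x_{n-1}=ny_n-(n-1)y_{n-1}$ underlying Lemma~\ref{limexists}, and then lets $\eps\downarrow 0$ to identify the limit. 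Your plan is salvageable and essentially coincides with the paper's once you replace the bounded-convergence step for the full-tuple terms by this explicit treatment of the recursion (or by a version of Lemma~\ref{limexists} generalized to $y_n\sim\beta n^{t}$).
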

\begin{proof} For brevity, we consider $t=2$ only. Let $E_{n,\bold k}$ denote the expected 
number of ordered pairs of vertices with ranks $k_1$ and $k_2$ respectively; so $E_{n,\bold k}=
n(n-1)p_n(\bold k)$.  Then
\[
E_{n,\bold k}=E_{n,\bold k}^\prime+E_{n,\bold k}^{\prime\prime};
\]
here $E_{n,\bold k}^\prime$ is the contribution of the ordered pairs $(v_1,v_2)$ such that
$v_1$ is not a descendant of $v_2$, and $v_2$ is not a descendant of $v_1$. $E_{n,\bold k}^{\prime\prime}$ comes from the remaining pairs $(v_1,v_2)$. Obviously $E_{n,\bold k}^{\prime\prime}\le 2{\cal E}_n$, ${\cal E}_n$ being  the expected number of pairs $(v_1,v_2)$ such that $v_2$ is a descendant of $v_1$. Then, for $n>1$,
\[
{\cal E}_n = (n-1)+ \frac{2}{n}\sum_{j=0}^{n-1}{\cal E}_j\Longrightarrow {\cal E}_n =O(n\log n).
\]
Therefore $E_{n,\bold k}^{\prime\prime}=O(n\log n)$. Turn to $E_{n,\bold k}^\prime$. This time
\[
E_{n,\bold k}^\prime=\frac{2}{n}\sum_{j=0}^{n-1}E_{j,k_1}E_{n-j-1,k_2}+\frac{2}{n}\sum_{j=0}^{n-1}E_{j,\bold k}
^\prime;
\]
the first sum accounts for pairs $(v_1,v_2)$ such that $v_1$ and $v_2$ do not belong to the 
same subtree, which explains the product $E_{j,k_1}E_{n-j-1,k_2}$ of the expected (conditional)
counts of vertices of rank $k_1$ and of rank $k_2$, in the left subtree and the right subtree
respectively. We know that, for a fixed $k$, $E_{\nu,k}=\nu c_k +o(\nu)$ if $\nu\to\infty$. It
follows then easily that
\[  
\frac{2}{n}\sum_{j=0}^{n-1}E_{j,k_1}E_{n-j-1,k_2}=c_{k_1}c_{k_2}\frac{n^2}{3} +o(n^2).
\]
Therefore, for every $\eps>0$ there exists $A=A(\eps)>0$ such that
\begin{equation}\label{bn^+}
\frac{2}{n}\sum_{j=0}^{n-1}E_{j,k_1}E_{n-j-1,k_2}\le b_n^+:=\frac{n^2}{3}c_{k_1}c_{k_2}+\eps n^2+A.
\end{equation}
This implies $E_{n,\bold k}^\prime \le {\cal E}_{n,\bold k}^+$, where
\[
{\cal E}_{n,\bold k}^+=b_n^+ + \frac{2}{n}\sum_{j=0}^{n-1}{\cal E}_{j,\bold k}^+,\quad {\cal E}_{j,\bold k}^+=0,\,\,(j=0,1).
\]
So, as usual,
\[
{\cal E}_{n,\bold k}^+=(n+1)\sum_{j=2}^n\frac{jb_j^+ - (j-1)b_{j-!}^+}{j(j+1)};
\]
here, using \eqref{bn^+},
\[
\frac{jb_j^+ - (j-1)b_{j-!}^+}{j(j+1)}=\frac{(c_{k_1}c_{k_2}+3\eps)j^2 +O(j)}{j^2+O(j)}=
c_{k_1}c_{k_2}+3\eps+O({j-1}).
\]
Consequently
\[
{\cal E}_{n,\bold k}^+=\bigl[c_{k_1}c_{k_2}+3\eps\bigr]n^2 +O(n\log n).
\]
This implies
\[
\limsup \frac{E_{n,\bold k}^\prime}{n(n-1)}\le \lim\frac{{\cal E}_{n,\bold k}^+}{n^2}=c_{k_1}c_{k_2}+3\eps.
\]
Analogously,
\[
\liminf \frac{E_{n,\bold k}^\prime}{n(n-1)}\ge c_{k_1}c_{k_2}-3\eps.
\]
Letting $\eps\downarrow 0$, we obtain 
$
\lim \frac{E_{n,\bold k}^\prime}{n(n-1)}=c_{k_1}c_{k_2}.
$
Since $E_{n,\bold k}^{\prime\prime}=O(n\log n)$, we conclude that 
$\lim \frac{E_{n,\bold k}}{n(n-1)}=c_{k_1}c_{k_2}$.
\end{proof}

\begin{corollary}\label{concentr} Introduce $V_{n,k}$, the total number of vertices of
rank $k$; so $V_{n,0}=L_n$, the total number of leaves. Then $V_{n,k}/n\to c_k$  in
probability, i. e. for every $\eps>0$, $\pr(|V_{n,k}/n - c_k|>\eps)=o(1)$ as $n\to\infty$.
\end{corollary}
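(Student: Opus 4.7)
The plan is to prove concentration via the second moment method, using Theorem \ref{mutind} (with $t=2$) together with the already-established convergence $E_{n,k}/n\to c_k$ to obtain a variance bound, then apply Chebyshev's inequality.

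First I would write $V_{n,k}=\sum_{v\in[n]}\mathbf{1}_{\{R(v)=k\}}$, so that expanding the square and separating the diagonal gives
\[
\E[V_{n,k}^2] \;=\; E_{n,k} + E_{n,(k,k)},
\]
where $E_{n,(k,k)}$ denotes the expected number of ordered pairs of distinct vertices both of rank $k$, i.e.\ the quantity $E_{n,\mathbf{k}}$ from Theorem \ref{mutind} specialized to $\mathbf{k}=(k,k)$. That theorem yields $E_{n,(k,k)}/[n(n-1)]\to c_k^2$, while $E_{n,k}/n\to c_k$ by definition of $c_k$. Consequently
\[
\frac{\E[V_{n,k}^2]}{n^2} \;=\; \frac{E_{n,k}}{n^2} \;+\; \frac{n-1}{n}\cdot\frac{E_{n,(k,k)}}{n(n-1)} \;\longrightarrow\; c_k^2,
\]
and since $(E_{n,k}/n)^2\to c_k^2$ as well, I obtain $\Var(V_{n,k})/n^2\to 0$.

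From here Chebyshev's inequality gives $\Pr\bigl(|V_{n,k}-E_{n,k}|>\eps n/2\bigr)\le 4\Var(V_{n,k})/(\eps n)^2 = o(1)$, and combining this with the fact that $|E_{n,k}/n - c_k|<\eps/2$ for all sufficiently large $n$, the triangle inequality yields $\Pr(|V_{n,k}/n - c_k|>\eps)=o(1)$, which is the claim. There is no genuine obstacle: all the analytic content resides in Theorem \ref{mutind}, and this corollary is merely the standard second-moment reformulation of that joint-convergence statement.
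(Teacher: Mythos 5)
Your proposal is correct and is essentially the paper's own argument: the paper likewise notes that $\E[V_{n,k}]/n\to c_k$ and $\E[V_{n,k}(V_{n,k}-1)]/[n(n-1)]\to c_k^2$ (the latter being Theorem \ref{mutind} with $t=2$ and $\mathbf{k}=(k,k)$) and then applies Chebyshev's inequality. Your write-up just spells out the diagonal/off-diagonal decomposition and the triangle-inequality step that the paper leaves implicit.
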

\begin{proof} We know that $\ex[V_{n,k}]/n=E_{n,k}/n\to c_k$, and we also know that $\ex[V_{n,k}(V_{n,k}-1)
/n(n-1)]\to c_k^2$. It remains to apply Chebyshev's inequality.
\end{proof}

That $o(1)$ in the Corollary would not be enough for us. Recall though that for $L_n:=V_{n,0}$ we were able to show (Corollary \ref{PXn<an}) that $V_{n,0}< (c_0-\eps)n$ with probability $\exp(-\eps n^{1-\delta})$ at most, smaller than $n^{-K}$ for all $K>0$.
We conjecture that the analogous property holds for all $V_{n,k}$.  A weaker
claim, analogously proved, will suffice for our needs in Section $3$.
\begin{lemma}\label{Lnk>epsn} For $\delta<1$ and $n$ large enough,
\[
\pr(V_{n,k}< 0.03an) \le \exp\bigl(-0.01an^{1-\delta}\bigr),\quad a:=1/k!.
\]
\end{lemma}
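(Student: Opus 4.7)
The plan is to lower-bound $V_{n,k}$ by a simpler count to which the Chernoff machinery of Corollary~\ref{PXn<an} adapts. Set $m=k+1$; a chain-shaped (path-like) binary tree on $m$ vertices has root of rank exactly $k$, so if $Q^{(n)}$ counts the chain-shaped size-$m$ subtrees of the random BST of size $n$, then $V_{n,k}\ge Q^{(n)}$, since distinct size-$m$ subtrees are vertex-disjoint. A standard recurrence parallel to Lemma~\ref{limexists} yields the classical $\ex[N^{(n)}]=2(n+1)/((m+1)(m+2))$ for $n\ge m+1$, where $N^{(n)}$ denotes the total number of size-$m$ subtrees; and conditional on sizes each size-$m$ subtree is an independent uniform random BST of size $m$, hence a chain with probability $P_c=2^{m-1}/m!$. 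Consequently
\[
\alpha_k \;:=\; \lim_{n\to\infty}\frac{\E[Q^{(n)}]}{n}\;=\;\frac{2P_c}{(m+1)(m+2)}\;=\;\frac{2^{k+1}}{(k+1)(k+2)(k+3)}\cdot\frac{1}{k!}\;\ge\;\frac{2}{15}\cdot\frac{1}{k!},
\]
the last inequality being an elementary numerical check with equality at $k\in\{2,3\}$.

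For concentration, the bivariate generating function $A(x,y):=\sum_{n\ge 0}y^n\E[x^{Q^{(n)}}]$ satisfies, by the same style of derivation as for \eqref{diff,F1}, the Riccati ODE
\[
\partial_y A\;=\;A^2-mP_c(1-x)\,y^{m-1},\qquad A(x,0)=1,
\]
reducing to the leaf case at $m=1,\,P_c=1$. A regular perturbation in $\eps:=1-x\downarrow 0$, writing $A=\frac{1}{1-y}+\eps B(y)+O(\eps^2)$ and integrating the resulting linear ODE for $B$ (which develops a double pole at $y=1$ of residue $\alpha_k$), places by matched asymptotics the dominant singularity of $y\mapsto A(x,y)$ at $y^\ast(x)=1+\alpha_k\eps+O(\eps^2)$. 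Running the Chernoff step of Corollary~\ref{PXn<an} verbatim with $x=\exp(-n^{-\delta})$ and $y=\exp(bn^{-\delta})$ for $b$ slightly below $\alpha_k$, and using that $A(x,y)=O(1)$ in this regime, yields $\Pr(Q^{(n)}<\beta n)\le\exp\bigl(-(b-\beta)n^{1-\delta}\bigr)\cdot O(1)$ for any $\beta<b$. Since $V_{n,k}\ge Q^{(n)}$, choosing $b=\alpha_k/2$ and $\beta=0.03/k!$ gives the stated bound, since $b-\beta\ge (1/15-3/100)/k!>0.01/k!$.

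The main obstacle is the rigorous analysis of the Riccati ODE for $A$ when $m\ge 2$, where no elementary closed form analogous to \eqref{bivariate} is available. A clean workaround sandwiches $y^{m-1}$ by constants: $y^{m-1}\le 1$ on $[0,1]$ and $y^{m-1}\ge 2^{-(m-1)}$ on $[\tfrac12,1]$. Comparing $A$ with solutions of the constant-coefficient Riccati $B'=B^2-c(1-x)$ for appropriate $c\in[mP_c\cdot 2^{-(m-1)},\,mP_c]$, for which \eqref{bivariate} supplies the closed form, pins $y^\ast(x)$ to the needed leading order in $\eps$ and makes the Chernoff machinery apply.
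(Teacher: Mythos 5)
Your proposal follows essentially the same route as the paper. Your $Q^{(n)}$, the number of chain-shaped fringe subtrees on $k+1$ vertices, is exactly the paper's $\mathcal{V}_{n,k}$ (vertex-to-leaf paths of length $k$ all of whose non-leaf vertices have a single child); your Riccati equation $\partial_y A=A^2-mP_c(1-x)y^{m-1}$ with $mP_c=2^k/k!$ is the paper's \eqref{diff,F2}; and your ``clean workaround'' --- replacing $y^{m-1}$ by its lower bound $2^{-(m-1)}$ for $y\ge 1/2$ and comparing with the constant-coefficient Riccati, solved in closed form as in \eqref{bivariate} --- is precisely how the paper argues. The expectation computation and the matched-asymptotics location of the singularity at $1+\alpha_k\eps$ are thus only motivation, not part of the rigorous argument.

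One quantitative point does need repair. The rigorous comparison guarantees finiteness of $A(x,\cdot)$ only up to the blow-up point $y_1(x)$ of the comparison solution, and since all you know a priori is $A(x,1/2)\le 2$, the formula \eqref{y1(x)=} yields only
\[
y_1(x)-1\;\ge\;\frac{1}{2\sqrt{a\eps}}\log\frac{2+\sqrt{a\eps}}{2-\sqrt{a\eps}}-\frac12\;=\;\frac{a\eps}{24}+O(\eps^2),\qquad \eps:=1-x,\ a=1/k!,
\]
not the heuristic $\alpha_k\eps$ with $\alpha_k\ge \tfrac{2}{15}a$. Your choice $b=\alpha_k/2\ge a/15>a/24$ therefore places the Chernoff evaluation point beyond the interval on which your own comparison bound is valid; sharpening the bound $A(x,1/2)\le 2$ to $2-c\eps$ with a useful $c$ would require a lower bound on $1-\E[x^{Q^{(n)}}]$ of the very concentration type you are trying to prove. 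The fix is simply to take $b$ just below $a/24$: since you only need $b-\beta>0.01a$ with $\beta=0.03a$, any $b\in(0.04a,\,a/24)$ works, and the paper indeed takes $b=0.04a$. With that adjustment your argument is the paper's proof.
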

\begin{proof} {\bf (i)\/} Clearly $V_{n,k}\ge \cal V_{n,k}$, which is the total number of vertex-to-leaf
paths of length $k$ such that every non-leaf vertex of the path has only one child. Introduce 
$F(x,y)=\sum_{n\ge 0}y^n \ex\bigl[x^{\cal V_{n,k}}\bigr]$, $(\cal V_{0,k}:=0)$. For $y<1$, $F(1,y)=(1-y)^{-1}$;  so for $x\le 1$, $y<1$, we have $F(x,y)\le (1-y)^{-1}<\infty$. 

Now $\cal V_{n,k}=0$ for $n\le k$, $\cal V_{k+1,k}=1\, (0 \text{ resp.})$ with probability $2^k/(k+1)!$ ($1-
2^k/(k+1)!$, resp.), and for $n>k+1$,
\[
\ex\bigl[x^{\cal V_{n,k}}\bigr]=\frac{1}{n}\sum_{j=0}^{n-1} \ex\bigl[x^{\cal V_{j,k}}\bigr]\cdot 
\ex\bigl[x^{\cal V_{n-1-j,k}}\bigr].
\]
It follows after simple algebra that 
\begin{equation}\label{diff,F2}
\frac{\partial}{\partial y}F(x,y) = F^2(x,y) - (1-x)y^k\frac{2^k}{k!},
\end{equation}
blending with \eqref{diff,F1} for $k=0$. Consequently, for $y\ge 1/2$,
\[
\frac{\partial}{\partial y}F(x,y) \le  F^2(x,y) - a(1-x),\quad (a=1/k!)\,.
\]
Introduce $G(x,y)$, ($y\ge 1/2$), the solution of
\[
\frac{\partial}{\partial y}G(x,y) = G^2(x,y) - a(1-x),\quad G(x,1/2)=F(x,1/2).
\]
Integrating the last equation and using
\[
G^2(x,1/2)-a(1-x)=F^2(x,1/2)-a(1-x)>0, 
\]
we obtain that $G(x,y)$ exists for $y\in [1/2,y_1(x))$,
\begin{equation}\label{y1(x)=}
y_1(x):=1/2+\bigl(2\sqrt{a(1-x)}\bigr)^{-1}\,
\log\frac{F(x,1/2)+\sqrt{a(1-x)}}{F(x,1/2)-\sqrt{a(1-x)}},
\end{equation}
and it is given by
\begin{equation}\label{Gxy=}
G(x,y)=\sqrt{a(1-x)}\,\frac{1+\exp(\sqrt{a(1-x)}(2y-1))\frac{F(x,1/2)-\sqrt{a(1-x)}}{F(x,1/2)+\sqrt{a(1-x)}}}
{1-\exp(\sqrt{a(1-x)}(2y-1))\frac{F(x,1/2)-\sqrt{a(1-x)}}{F(x,1/2)+\sqrt{a(1-x)}}}.
\end{equation}
(So $G(x,y)$ blows up as $y\uparrow y_1(x)$.) Consequently $F(x,y)$ exists for $y<y_1(x)$,
and $F(x,y)\le G(x,y)$ for $y\in [1/2,y_1(x))$.

{\bf (ii)\/} Armed with \eqref{y1(x)=}-\eqref{Gxy=} and $F(x,y)\le G(x,y)$, 
we choose $x=e^{-n^{-\delta}}$ and $y=e^{0.04a n^{-\delta}}$,
which is strictly below $y_1(x)$ for $n$ large, as $F(x,1/2)\le 2$, and apply the Chernoff-type bound:
\begin{align*}
\pr(\cal V_{n,k}< 0.03an) &\le x^{-0.03an}y^{-n} F(x,y) \le x^{-an}y^{-n} G(x,y)\\
&=O\bigl(x^{-an}y^{-n}\bigr)\le e^{-0.01an^{1-\delta}}.
\end{align*}
\end{proof}
\section{Closer look at the distribution $\{c_k\}$.} In Theorem \ref{Rn->R} we proved existence of 
finite  $\lim_{n\to\infty}
n^{-1}\sum_k \rho^k E_{n,k}$ for $\rho<3/2$, which implied that $1-\sum_{j=0}^{k-1} c_k=O(q^k)$ for every 
$q>2/3$. Focusing exclusively on the sequence $\{c_k\}$, we prove a considerably
stronger bound.
\begin{theorem}\label{1/3^n} The inequality
 \[
 1-\sum_{j=0}^{k}c_j \le \frac{6k+7}{3}\left(\frac{1}{3}\right)^k
 \] holds.
 \end{theorem}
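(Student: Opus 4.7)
The plan is to reduce the bound to the first-order recursion
\[
c_{\ge k+1} \;\le\; \tfrac{1}{3}\, c_{\ge k}\, +\, \tfrac{2}{3^{k}},\qquad k\ge 0,
\]
and iterate.  Writing $b_k := 3^k c_{\ge k}$, the recursion becomes $b_{k+1}\le b_k + 6$, and with base value $b_0 = c_{\ge 0} = 1$ this gives $b_k \le 6k+1$, i.e.\ $c_{\ge k+1} \le (6k+7)/3^{k+1}$, which is the theorem.  So the whole proof reduces to the one-step inequality.

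My starting point is the identity
\[
c_{\ge k} \;=\; \sum_{n\ge 1}\frac{2\,q_n^{(k)}}{(n+1)(n+2)},\qquad q_n^{(k)} := \Pr(\text{root of a size-}n\text{ BST has rank}\ge k),
\]
which drops out of the recurrence $E_{n,\ge k} = q_n^{(k)} + \frac{2}{n}\sum_{j<n} E_{j,\ge k}$ (a summed form of \eqref{recurrence}) by the same telescoping \eqref{stand} that was used in Lemma~\ref{limexists}.  The rooted decomposition of a random BST of size $n\ge 2$ then gives
\[
n\, q_n^{(k+1)} \;=\; \sum_{L+R=n-1} f_k(L)\, f_k(R),\qquad f_k(0)=1,\ f_k(L)=q_L^{(k)}\ \text{for }L\ge 1,
\]
with the convention $f_k(0)=1$ reflecting that a missing child imposes no constraint, since the root has rank $\ge k+1$ exactly when every present child has rank $\ge k$ inside its own subtree.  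Substituting this into the formula for $c_{\ge k+1}$ and swapping the order of summation rewrites it as a double sum over $(L,R)$ with weight $\frac{2}{(L+R+1)(L+R+2)(L+R+3)}$.  The pieces with $L=0$ or $R=0$ telescope via $\frac{2}{s(s+1)(s+2)} = \frac{1}{s(s+1)} - \frac{1}{(s+1)(s+2)}$ to $c_{\ge k}-T_k$, where $T_k := 2\sum_L \frac{q_L^{(k)}}{(L+2)(L+3)} \le c_{\ge k}$.

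The hard part, and the place where I expect the main difficulty, is bounding the bilinear cross term
\[
(a) \;=\; \sum_{L,R\ge 1}\frac{2\,q_L^{(k)}\,q_R^{(k)}}{(L+R+1)(L+R+2)(L+R+3)}
\]
tightly enough to produce the factor $1/3$.  Na\"ive estimates such as $q_L q_R \le q_L$ give only $(a)\le T_k/2$ and hence the trivial $c_{\ge k+1}\le c_{\ge k}$, which is useless.  To extract the true $1/3$ decay I would exploit two features: the support restriction $q_L^{(k)}=0$ for $L\le k$ (so that effectively $L,R\ge k+1$ and $L+R\ge 2k+2$), and a weighted Cauchy--Schwarz or AM--GM estimate tuned to the cubic denominator; if needed, the inductive hypothesis $c_{\ge k} \le (6k+1)/3^k$ can be fed back in to control the remaining $\ell^1$-mass of $q^{(k)}$.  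The additive slack $2\cdot 3^{-k}$ is large enough to absorb the boundary contribution near $L=R=k+1$, where $q_{k+1}^{(k)} = 2^k/(k+1)!$ is already explicitly tiny.  Once the one-step recursion is in hand, a routine induction on $k$ closes the proof.
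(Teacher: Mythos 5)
Your reduction is set up correctly: the identity $c_{\ge k}=\sum_{n}2q_n^{(k)}/((n+1)(n+2))$ is just the coefficient-wise form of the paper's formula $1-\sum_{j\le k}c_j=2\int_0^1(1-y)B_{>k}(y)\,dy$, your bookkeeping of the boundary terms $L=0$ or $R=0$ is right, and if the one-step inequality $c_{\ge k+1}\le \tfrac13 c_{\ge k}+2\cdot 3^{-k}$ were established, the induction would indeed deliver the stated bound. But the proof has a genuine gap exactly where you flag it: the bilinear term
\[
(a)=\sum_{L,R\ge 1}\frac{2\,q_L^{(k)}q_R^{(k)}}{(L+R+1)(L+R+2)(L+R+3)}
\]
is never actually bounded. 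You correctly observe that the trivial estimate $q_Lq_R\le q_L$ only yields $c_{\ge k+1}\le c_{\ge k}$, and then you list candidate tools (support restriction $q_L^{(k)}=0$ for $L\le k$, weighted Cauchy--Schwarz, feeding back the inductive hypothesis) without carrying any of them out. None of these obviously works: the factor $1/3$ you need is not a consequence of the support restriction alone (that only improves $T_k$ versus $c_{\ge k}$ by a factor $(k+2)/(k+4)$), and a Cauchy--Schwarz bound on $(a)$ still requires quantitative control of how the mass of $q^{(k)}$ is distributed over $L$, which is precisely the hard unknown. The quadratic (product) structure of the root-rank recursion is the obstruction, and you have not shown how to get past it.

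The paper's proof sidesteps this exact difficulty by \emph{linearizing}: instead of the root rank (the shortest root-to-leaf path, whose tail probabilities satisfy the quadratic recursion you wrote), it tracks the length of a single randomized greedy root-to-leaf path, obtained by repeatedly deleting one of the two subtrees with probability proportional to its size. The tail probabilities $\pi_{n,>k}$ of that path length dominate $p_{n,>k}$ and satisfy a \emph{linear} recursion \eqref{g,recur}, hence a linear second-order ODE \eqref{lindifGk} for their generating function $P_{>k}$. The integrals $I_{k,t}=\int_0^1(1-y)^tP_{>k}(y)\,dy$ then obey the exact two-term linear recursion \eqref{recur,Ikt}, whose leading coefficient at $t=1$ is $2/((t+2)(t+1))=1/3$; iterating through $t=4,3,2,1$ produces the $(6k+7)/3\cdot 3^{-k}$ bound. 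If you want to salvage your direct approach, you would need either to prove the one-step inequality for $c_{\ge k}$ by some new argument about the joint distribution of the two subtree root ranks, or to import the paper's linearization idea, at which point your double sum becomes a single sum and the $1/3$ drops out of the weights rather than out of correlation estimates.
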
 
 \begin{proof} 
{\bf (i)\/} For $n\ge 1$, $k\ge 0$, let $a_{n,k}$ be the 
total number of vertices of rank $k$ in all $n!$ permutations of $[n]$, and let $b_{n,k}$ be
the total number of permutations for which the root of the tree is of rank $k$. So 
$a_{n,k}/n!=E_{n,k}$, the expected number of rank $k$ vertices in the random tree, and $b_{n,k}/n!$ is the probability that its root is of rank $k$.
Introduce $A_k(x)=\sum_{n>0} x^na_{n,k}/n! $ and $B_k(x)=\sum_{n>0}x^n b_{n,k}/n!$;
in particular, $B_0(x)=x$. From Lemma 3.1, Lemma 3.2 (B\'ona \cite{protected}),
\begin{equation}\label{A'B'kequation}
\begin{aligned}
A_k^\prime(x)&=\frac{2}{1-x}\cdot A_k(x)+B_k^\prime(x),\quad (k\ge 0),\\
B_k^\prime(x)&=2B_{k-1}(x)\cdot\left(\frac{1}{1-x}-\sum_{j=0}^{k-2}B_j(x)\right)-B_{k-1}(x)^2,
\quad (k>0).
\end{aligned}
\end{equation}
Introduce $A_{\le k}(x)=\sum_{0\le j\le k}A_j(x)$ and $B_{\le k}(x)=\sum_{0\le j\le k}B_j(x)$;
in particular $A_{\le k}(x)$ is the generating function of $\{\sum_{j\le k} E_{n,j}\}_{n\ge 0}$.
 It follows from the equation \eqref{A'B'kequation} that 
\begin{align}
A_{\le k}^\prime(x)&=\frac{2}{1-x}\cdot A_{\le k}(x)+B_{\le k}^\prime(x),\quad (k\ge 0),\label{calAk'}\\
\frac{d}{dx}\left(\frac{1}{1-x}-B_{\le k}(x)\right)&=\left(\frac{1}{1-x}-B_{\le k-1}(x)\right)^2 -1,
\quad (k>0).
\label{calBk'}
\end{align}
The equation \eqref{calBk'} can also be obtained directly via the conditional independence argument. Here is how.  Let $p_{n,\le k}$ be the probability that the root rank is
$k$ at most, so $p_{n,>k}:=1-p_{n,\le k}$ is the probability that the root rank strictly exceeds $k$. Clearly $p_{n,\le k}=\sum_{j\le k} b_{n,j}/n!$, and therefore
$B_{\le k}(x)$ is the generating function of $\{p_{n,\le k}\}_{n\ge 1}$.
Then, for $n>1$ and $k\ge 0$,
\[
p_{n,>k} = (1/n) \sum_{j=0}^{n-1}p_{j,>k-1}\cdot p_{n-j-1,>k-1},
\]
where $p_{0,>k-1}:=1$, since conditioned on the left subtree having size $k$, the left subtree and the right subtree are independent. Consequently, as $p_{1,>k}=0$ for all $k \geq 0$,
\begin{align*}
\frac{d}{dx} \sum_{n\ge 1} p_{n,>k}x^n &=\left(\sum_{n\ge 0} p_{n,>k-1}x^n\right)^2 - 
p_{0,>k-1}p_{0,>k-1}\\
&=\left(\sum_{n\ge 0} p_{n,>k-1}x^n\right)^2 -1.
\end{align*}
Here
\begin{align*}
\sum_{n\ge 1} p_{n,>k}x^n&= \sum_{n\ge 1} (1 - p_{n,\le k}) x^n
\\
&= \frac{x}{1-x} - B_{\le k}(x)= 1/(1-x) -1 -B_{\le k}(x),
\end{align*}
and
\begin{align*}
\sum_{n\ge 0}  p_{n,>k-1}x^n &= 1 + \sum_{n\ge 1}p_{n,>k-1} x^n\\
&=1+\frac{x}{1-x}-B_{\le k-1}(x)\\
&= \frac{1}{1-x} - B_{\le k-1}(x),\qquad (B_{\le -1}(x):=0).
\end{align*}
So
\[
\frac{d}{dx}\left(\frac{1}{1-x} - B_{\le k}(x)\right) =\left(\frac{1}{1-x}- B_{\le k-1}(x)\right)^2 - 1.
\]
{\bf (ii)\/} From \eqref{calAk'}, it follows that, for $k>0$,
\begin{multline}\label{calAk,calBk}
A_{\le k}(x)=\frac{1}{(1-x)^2}\int_0^x (1-y)^2B_{\le k}^\prime(y)\,dy\\
=\frac{1}{(1-x)^2}\left[(1-x)^2 B_{\le k}(x) +2\int_0^x (1-y)B_{\le k}(y)\,dy\right],
\end{multline}
so for $x\uparrow 1$
\[
A_{\le k}(x)\sim \frac{2}{(1-x)^2} \int_0^1(1-y)B_{\le k}(y)\,dy.
\]
Since $A_{\le k}(x)$ is the generating function of $\{\sum_{j\le k}E_{n,j}\}_{n\ge 0}$ and
$n^{-1}\sum_{j\le k} E_{n,j}$ $\to \sum_{j=0}^k c_j$, it follows by the Tauberian theorem  that
\begin{equation}\label{ck=int}
\sum_{j=0}^{k} c_j=2\int_0^1(1-y)B_{\le k}(y)\,dy.
\end{equation}
Obviously
\[
B_{\le k}(x)=\frac{x}{1-x} -B_{>k}(x),
\]
where $B_{>k}(x)$ is the generating function of $\{b_{n,>k}/n!\}$, $b_{n,>k}$ being the number
of permutations such that the root  rank (strictly)  exceeds $k$. Consequently
\begin{equation}\label{1-ck=int}
1-\sum_{j=0}^{k} c_j=2\int_0^1(1-y)B_{>k}(y)\,dy.
\end{equation}
Thus, to bound $1-\sum_{j=0}^{k }c_j$ from above we need to bound $B_{>k}(x)$ from above.
Clearly $b_{n,>k}$ is bounded above by the number of permutations for which there exists
a root-to-leaf path of (edge) length exceeding $k$. A success of this approach depends on
how efficient would be our search for a path that has a good chance to be comparable in length to
the shortest path.\\

{\bf (ii)\/} Here is a randomized greedy algorithm with a plausibly good chance to find such a 
competitive path.
If there are two non-empty subtrees at the root of the tree, we {\it delete\/} a subtree with probability
proportional to the number of vertices in it. We repeat the same procedure at the root of
the remaining subtree, and continue until the remaining subtree is a leaf of the whole tree. The
resulting sequence of roots of the nested subtrees forms a root-to-leaf path in the whole tree. 

For $n\ge 1$, $k\ge -1$, let $\pi_{n,>k}$ denote the probability that the length of this path exceeds $k$; obviously $p_{n,>k}\le \pi_{n,>k}$. Further, $\pi_{n,>-1}=1$, and for $n>1$, $k\ge 0$,
\[
\pi_{n,>k}=\frac{2}{n}\,\pi_{n-1,>k-1}
+\frac{1}{n}\sum_{j=1}^{n-2}\left[\frac{n-1-j}{n-1}\,\pi_{j,>k-1}+\frac{j}{n-1}\,\pi_{n-1-j,>k-1}\right],
\]
or
\begin{equation}\label{g,recur}
(n)_2\pi_{n,>k}=2(n-1)\pi_{n,>k-1}+2\sum_{j=1}^{n-2}(n-1-j)\pi_{j,>k-1}.
\end{equation}
Introduce $P_{>k}(x)=\sum_{n>0}\pi_{n,>k} x^n$; in particular 
\[
P_{>-1}(x)=\sum_{n>0} x^n=\frac{x}{1-x}.
\]
 Obviously $B_{>k}(x)\le P_{>k}(x)$,
and so the equation \eqref{1-ck=int} yields
\begin{equation}\label{1-ck<int}
1-\sum_{j=0}^{k}c_j\le 2\int_0^1(1-y)P_{>k}(y)\,dy,\quad (k\ge 0).
\end{equation}
Since $\pi_{n,>k}=0$ for $n\le k$, we have $P_{>k}^{(t)}(0) =0$ for $t\le k$. It follows from \eqref{g,recur} that 
\begin{align*}
\frac{d^2P_{>k}(x)}{dx^2}&=\sum_{n\ge 2}(n)_2\pi_{n,>k}\\
&=2\sum_{n\ge 2}(n-1)\pi_{n-1,>k-1}x^{n-2}+2\sum_{n\ge 2}x^{n-2}\sum_{j=1}^{n-2}(n-1-j)\pi_{j,>k-1}
\\
&=2\,\frac{d}{dx}\sum_{\nu\ge 1} \pi_{\nu,>k-1}x^{\nu}+2\sum_{j\ge 1}\pi_{j,>k-1}x^j\sum_{n\ge j+2}
(n-1-j)x^{n-2-j}\\
&=2\,\frac{dP_{>k-1}}{dx}+2\left(\sum_{j\ge 1}\pi_{j,>k-1}x^j\right)\left(\sum_{\nu\ge 1}\nu x^{\nu-1}\right)
\\
&=2\,\frac{dP_{>k-1}}{dx}+\frac{2}{(1-x)^2} \,P_{>k-1}(x).
\end{align*}
Thus
\begin{equation}\label{lindifGk}
\frac{d^2P_{>k}(x)}{dx^2}=2\,\frac{dP_{>k-1}}{dx}+\frac{2}{(1-x)^2} \,P_{>k-1}(x),\quad 
(P_{>k}^{(r)}(0) =0\text{  for } r\le k). 
\end{equation}
In light of \eqref{1-ck<int} it  seems necessary, as before, to integrate successively the differential equations
\eqref{lindifGk} for $P_{>k^\prime}(x)$, $k^\prime=1,2,\dots,k$, and then to evaluate the RHS 
of the bound \eqref{1-ck<int}. In fact, that is how we computed the bounds  \eqref{1-ck<int}
for $k$ up to $10$; linearity of \eqref{lindifGk} was critical for success of this computation. The data showed, rather compellingly, that the bound decays faster than
$(1/2)^k$. In absence of any tractable expression for $P_{>k}(x)$ when $k$ is large, the
issue was to find a way to bound the integral in \eqref{1-ck<int} without such an expression.\\

{\bf (iii)\/} Linearity of \eqref{lindifGk} to the rescue again!  Introduce
\[
I_{k,t}=\int_0^1(1-y)^t P_{>k}(y)\,dy, \quad k\ge -1,\,t>0;
\]
so 
\begin{equation}\label{1-ck<2Ik1}
1-\sum_{j=0}^{k-1}c_k \le 2I_{k,1},\quad (k>0).
\end{equation}
Notice first that, for $t>0$,
\begin{align}
I_{-1,t}&=\int_0^1(1-y)^tP_{>-1}(y)\,dy=\int_0^1\bigl[-(1-y)^t+(1-y)^{t-1}\bigr]\,dt\notag\\
&=\frac{1}{t(t+1)}.\label{I0t=}
\end{align}
Let us show that, for $k\ge 0$, $t>0$,
\begin{equation}\label{recur,Ikt}
I_{k,t}=\frac{2}{(t+2)_2}\bigl[I_{k-1,t}+(t+2)I_{k-1,t+1}\bigr].
\end{equation}
Indeed, using $P_{>k}^{(r)}(0)=0$ for $r=0,1$ and \eqref{lindifGk},
\begin{align*}
I_{k,t}&=\int_0^1(1-y)^tP_{>k}(y)\,dy\\
&=\frac{1}{(t+2)_2}\int_0^1(1-y)^{t+2}\frac{d^2 P_{>k}(y)}{dy^2}\,dy\\
&=\frac{2}{(t+2)_2}\int_0^1(1-y)^{t+2}\left[\frac{dP_{>k-1}(y)}{dy}+\frac{P_{>k-1}(y)}{(1-y)^2}\right]\,dy\\
&=\frac{2}{(t+2)_2}\left[(t+2)\int_0^1(1-y)^{t+1}P_{>k-1}(y)\,dy+\int_0^1(1-y)^tP_{>k-1}(y)\,dy\right]\\
&=\frac{2}{(t+2)_2}\bigl[I_{k-1,t}+(t+2)I_{k-1,t+1}\bigr].
\end{align*}
In particular, 
\[
I_{k,1}=\frac{1}{3}\bigl[I_{k-1,1}+3I_{k-1,2}\bigr]\ge \frac{1}{3}I_{k-1,1},
\]
so that $I_{k,1}\ge \text{const }(1/3)^k$. We are about to show that in fact $I_{k,1}\le \text{const }(1/3)^k$,
i. e. $I_{k,1}$ is of order $(1/3)^k$ exactly.

To ths end,  fix $\tau>0$ and consider $I_{k,t}$ for $k\ge -1$ and $t\ge\tau$. Let us show that
\begin{equation}\label{Ikt,t=>tau}
I_{k,t}\le \frac{1}{(t+1)_2}\left(\frac{2}{\tau+2}\right)^{k+1}.
\end{equation}
By \eqref{I0t=}, the bound holds for $k=-1$. Inductively, if it holds for for some $k\ge 0$, then
by \eqref{recur,Ikt}
\begin{align*}
I_{k+1,t}&\le\frac{2}{(t+2)_2}\left[\frac{1}{(t+1)_2}\left(\frac{2}{\tau+2}\right)^{k+1}+\frac{t+2}{(t+2)_2}
\left(\frac{2}{\tau+2}\right)^{k+1}\right]\\
&=\frac{2}{(t+2)_3}\left(\frac{2}{\tau+2}\right)^{k+1}\\
&\le \frac{1}{(t+1)_2}\left(\frac{2}{\tau+2}\right)^{k+2}.
\end{align*}
So the bound \eqref{Ikt,t=>tau} is proven. In particular, 
\begin{equation*}
\text{for }t\ge 4,\quad I_{k,t}\le \frac{1}{(t+1)_2}\left(\frac{1}{3}\right)^{k+1}\Longrightarrow 
I_{k,4}\le 0.05 \left(\frac{1}{3}\right)^{k+1}.
\end{equation*}
Using the equation \eqref{recur,Ikt} for $t=3$, we have then
\begin{align*}
I_{k,3}=\frac{1}{10}I_{k-1,3}+\frac{1}{2}I_{k-1,4}
\le 0.1 I_{k-1,3}+0.025\left(\frac{1}{3}\right)^k.
\end{align*}
Iterating this recurrence inequality and using \eqref{I0t=} for $I_{0,3}$, we obtain
\begin{equation}\label{Ik3<}
\begin{aligned}
I_{k,3}&\le \frac{1}{3\cdot 4}\left(\frac{1}{10}\right)^{k+1}+0.025\left(\frac{1}{3}\right)^{k}\sum_{j\ge 0}
\left(\frac{3}{10}\right)^j\\
&=\left(\frac{1}{3}\right)^{k+1}\left(\frac{1}{12}+0.025\cdot\frac{30}{7}\right)\le\frac{1}{5}\left(\frac{1}{3}\right)^{k+1}.
\end{aligned}
\end{equation}
Analogously, using  the equation \eqref{recur,Ikt} for $t=2$ in conjunction with \eqref{Ik3<},
we iterate the resulting recurrence inequality
\[
I_{k,2}\le \frac{1}{6}I_{k-1,2}+\frac{2}{15}\left(\frac{1}{3}\right)^k .
\]
Recalling \eqref{I0t=} for $I_{0,2}$, we obtain
\begin{equation}\label{Ik2<}
I_{k,2}\le \left(\frac{1}{3}\right)^{k+1}.
\end{equation}
Lastly, combining \eqref{recur,Ikt} for $t=1$ and \eqref{Ik2<}, we have
\begin{equation*}
I_{k,1}\le \frac{1}{3}I_{k-1,1}+\left(\frac{1}{3}\right)^{k}.
\end{equation*}
 this recurrence, and using  \eqref{I0t=} for $I_{0,1}$, we arrive at
\begin{equation}\label{Ik1<}
I_{k,1}\le \frac{6k+7}{6}\left(\frac{1}{3}\right)^k.
\end{equation}
The bounds \eqref{Ik1<} and \eqref{1-ck<2Ik1} taken together imply
\[
1-\sum_{j=0}^{k} c_j \le \frac{6k+7}{3}\left(\frac{1}{3}\right)^k.
\]
\end{proof}

Next we prove a qualitatively matching lower bound for $1-\sum_{j=0}^{k}c_j$.
 Introduce the function $g(\alpha)= \alpha +\alpha \log(2/\alpha)-1$. The equation 
$g(\alpha)=0$ has two positive roots. Let $\alpha_0$ denote the smaller root; $\alpha_0\approx
0.373$. It was proved in  \cite{mahmoudpit} that the likely length of the shortest path from the root
of the random tree to a leaf is at lest $(\alpha_0-\varepsilon)\log n$, for every $\varepsilon>0$.
\begin{theorem}\label{exp,lower} There exists a positive constant $\gamma$ such that for all $k\ge 0$,
\begin{equation}\label{lower,alpha0}
 1-\sum_{j=0}^{k}c_j\ge \gamma e^{-k/\alpha_0}.
\end{equation}
\end{theorem}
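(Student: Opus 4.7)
The plan is to exploit the identity
\[
1 - \sum_{j=0}^{k} c_j \;=\; 2\sum_{n\ge 1} \frac{p_{n,>k}}{(n+1)(n+2)},
\]
which follows from \eqref{1-ck=int} by expanding $B_{>k}(y) = \sum_{n\ge 1} p_{n,>k} y^n$ and using $\int_0^1 (1-y) y^n\,dy = 1/((n+1)(n+2))$. Here $p_{n,>k} = \Pr(D_n > k)$, with $D_n$ the length of the shortest root-to-leaf path in a random BST on $n$ vertices. The strategy is to locate a range $n \ge N_k$ on which $p_{n,>k}$ is bounded below by a positive constant, then telescope the tail: $\sum_{n\ge N_k} 1/((n+1)(n+2)) = 1/(N_k+1)$.

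Concretely, I would fix a large constant $D$ and set $N_k = \lceil D\,e^{k/\alpha_0}\rceil$. If I can show $p_{n,>k} \ge 1/2$ for every $n \ge N_k$ (once $k$ exceeds a threshold $k_0$), then $1 - \sum_{j\le k}c_j \ge 1/(N_k+1) \ge (2D)^{-1} e^{-k/\alpha_0}$ for $k \ge k_0$. For each of the finitely many $k < k_0$, the quantity $1 - \sum_{j\le k} c_j$ is a strictly positive constant, so the required inequality is secured by taking $\gamma$ sufficiently small.

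The key technical step is the tail bound on $D_n$. By Markov's inequality, $\Pr(D_n \le k) = \Pr(X_{n,\le k}\ge 1) \le \ex[X_{n,\le k}]$, where $X_{n,\le k}$ is the number of leaves at depth at most $k$. The generating function estimate $\sum_{j\ge 0} y^j \ex[X_{n,j}] \le \binom{n+2y-1}{n}$ derived in the proof of Theorem~\ref{Rn->R} then gives, via a Chernoff-type bound with $y\in(0,1)$, $\ex[X_{n,\le k}] \le y^{-k}\binom{n+2y-1}{n}$. Setting $y = \alpha/2$ with $\alpha := k/\log n$ and invoking Stirling collapses the $n$-exponent exactly to $g(\alpha) = \alpha + \alpha\log(2/\alpha) - 1$, yielding $\ex[X_{n,\le k}] \le C\, n^{g(\alpha)}/\Gamma(\alpha)$ for a universal constant $C$.

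For $n \ge N_k$ one checks $\alpha_0 - \alpha \ge \alpha_0^2\,(\log D)/k \cdot (1 - o(1))$, and a Taylor expansion of $g$ at $\alpha_0$ (using $g(\alpha_0) = 0$ and $g'(\alpha_0) = \log(2/\alpha_0) > 0$) gives $n^{g(\alpha)} \le D^{-\alpha_0\log(2/\alpha_0)}(1 + o(1))$, a constant depending only on $D$. Since $\alpha_0\log(2/\alpha_0) > 0$, choosing $D$ large enough forces $\ex[X_{n,\le k}] \le 1/2$ uniformly on $n \ge N_k$; for $n$ well beyond $N_k$ the bound only improves, since $\alpha \to 0$, $g(\alpha) \to -1$, and $1/\Gamma(\alpha) \to 0$. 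The main obstacle is exactly this sharpening: the result $\Pr(D_n \ge (\alpha_0-\varepsilon)\log n) \to 1$ cited from \cite{mahmoudpit} would yield only $e^{-k/(\alpha_0-\varepsilon)}$ in the final estimate, strictly smaller than the target $e^{-k/\alpha_0}$. One must therefore retain the exact function $g$ and tolerate only an \emph{additive} $O(1)$ slack in $\log n$ rather than a multiplicative $\varepsilon$ margin.
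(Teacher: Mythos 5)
Your proof is correct, and while it rests on the same two pillars as the paper's argument --- (1) high-rank vertices are roots of large fringe subtrees, and (2) the shortest root-to-leaf path of a size-$m$ BST is typically longer than $\alpha_0\log m$ --- it executes both differently. For (1), the paper counts fringe subtrees of size $m\ge \ell$ directly, via the recurrence giving $\ex[Y_{n,\ell}]/n\to 2/(\ell+1)$, and lower-bounds $\sum_{j>k}c_j$ by the density of such subtrees whose roots have large internal rank; your identity $1-\sum_{j\le k}c_j=2\sum_{n\ge1}p_{n,>k}/\bigl((n+1)(n+2)\bigr)$, read off from \eqref{1-ck=int}, is exactly the generating-function form of that fringe decomposition (note $\sum_{m\ge\ell}\tfrac{2}{(m+1)(m+2)}=\tfrac{2}{\ell+1}$), so this part is equivalent but cleaner, since it bypasses the subtree-counting recurrence and the interchange of limits. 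For (2), the paper imports the precise local asymptotics \eqref{E[Xmmu]} of $\ex[X_{m,\mu}]$ from \cite{mahmoudpit} in the critical window $\mu\approx\alpha_0\log m$ to get $\pr(S_m\le\alpha_0\log m)=O\bigl((\log m)^{-3/2}\bigr)$, whereas you get by with the crude bound $\sum_j r^j\ex[X_{n,j}]=O(n^{2r-1})$ already established in the proof of Theorem \ref{Rn->R}, optimized at $r=k/(2\log n)$ to produce the exponent $g(\alpha)$; your observation that one must tolerate only an additive $O(1)$ slack in $\log n$ (a multiplicative $\eps$ would degrade the exponent to $1/(\alpha_0-\eps)$) is precisely the crux, and the concavity of $g$ with $g(\alpha_0)=0$, $g'(\alpha_0)=\log(2/\alpha_0)>0$ delivers the uniform bound $\pr(D_n\le k)\le D^{-\alpha_0\log(2/\alpha_0)}\le 1/2$ for $n\ge D e^{k/\alpha_0}$ once $D$ is a moderate explicit constant. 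The trade-off is only in the constant: the paper obtains $\gamma$ arbitrarily close to $2/3$, while you get $(2D)^{-1}$; and the positivity of $1-\sum_{j\le k}c_j$ for the finitely many small $k$, which you assert without proof, does follow at once from monotonicity in $k$ together with your bound at large $k$ (or from $p_{n,>k}>0$ at $n=2^{k+2}-1$). Altogether this is a valid, more self-contained proof of the theorem.
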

\begin{proof} {\bf (a)\/} Given an integer $m$, consider the random tree on $[m]$. Let $S_m$
denote the edge length of the shortest path from the root to a leaf. Then, for every $s\in [0,m-1]$,
\[
\pr(S_m\le s)=\sum_{\mu\le s}\pr(S_m=\mu)\le\sum_{\mu\le s}\ex[X_{m,\mu}],
\]
where $X_{m,\mu}$ is the total number of leaves at distance $\mu$ from the root. By 
\eqref{(log n+1)^{k-1}},
\begin{equation*}
\ex[X_{m,\mu}]\le \frac{2^{\mu}}{(\mu-1)!}\frac{(\log m +1)^{\mu-1}}{m}.
\end{equation*}
Given $\gamma>0$, we have: for $\mu\le \gamma\log m$,
\begin{align*}
\ex[X_{m,\mu}]&\le \frac{\mu}{m(\log m+1)}\cdot \frac{2^{\mu}(\log m+1)^{\mu}}{\mu!}\\
&\le \frac{\gamma e^{\gamma}}{m}\left(\frac{2\log m}{\mu/e}\right)^{\mu}.
\end{align*}
As a function of $\mu$, the RHS increases for $\mu\le 2\log m$. Assuming that $\gamma\le 2$,
we obtain then that 
\begin{align*}
\pr(S_m\le \gamma \log m)&\le \frac{\gamma^2 e^{\gamma}\log m}{m}\cdot
\left(\frac{2e}{\gamma}\right)^{\gamma \log m}\\
&=\gamma^2e^{\gamma}\log m\cdot\exp[g(\gamma)\log m].
\end{align*}
Now $g(\alpha)$ is strictly increasing on $[0,\alpha_0]$, from $g(0)=-1$ to $g(\alpha_0)=0$.
So picking $\gamma=\alpha_0/2$ say, we obtain 
\begin{equation}\label{s << log m}
\pr(S_m\le (\alpha_0/2) \log m)\le (\alpha_0/2)^2e^{\alpha_0/2}\cdot m^{g(\alpha_0/2)}\log m=o(1).
\end{equation}
For $\alpha:=\mu/\log m\in (\alpha_0/2,\alpha_0)$ we have (see \cite{mahmoudpit})
\begin{equation}\label{E[Xmmu]}
\begin{aligned}
\ex[X_{m,\mu}]&=(1+\eps_m)K(\alpha) (\log m)^{-1/2}\exp[g(\alpha)\log m], 
\\
K(\alpha)&:=\bigl(\sqrt{2\pi\alpha}\,\Gamma(\alpha)\bigr)^{-1}\exp(\alpha -1),
\end{aligned}
\end{equation}
where $\lim_{m\to\infty} \eps_m=0$. By convexity of $g(\alpha)$ on $[0,\alpha_0]$,
\[
g(\alpha)\le g(\alpha_0)+(\alpha-\alpha_0)g^\prime(\alpha_0)=(\alpha-\alpha_0)g^\prime(\alpha_0),
\]
where $g^\prime:=g^\prime(\alpha_0)>0$. Therefore $\sum_{\alpha\in (\alpha_0/2,\alpha_0]}
\ex[X_{m,\mu}]$ is of order
\begin{equation*}
(\log m)^{-3/2}\!\!\int\limits_{\alpha_0g^\prime/2}^{\alpha_0g^\prime}\!\!\!e^{-x}\,dx=O\bigl((\log m)^{-3/2}\bigr).
\end{equation*}
Recalling \eqref{s << log m}, we conclude that
\begin{equation}\label{pr(Smsmall)}
\pr(S_m\le \alpha_0 \log m) =O\bigl((\log m)^{-3/2}\bigr).
\end{equation}
{\bf (b)\/} Given $\ell\ge 0$, let $Y_{n,\ell}$ denote the  total number of subtrees of size $m\ge\ell$. Then, for $n\ge\ell$,
\[
\ex[Y_{n,\ell}]=1+\frac{1}{n}\sum_{j=0}^{n-1}\bigl(\ex[Y_{j,\ell}]+\ex[Y_{n-1-j,\ell}]\bigr),
\]
with $\ex[Y_{j,\ell}]=0$ for $j<\ell$. The standard computation shows that
\begin{equation}\label{Ynell/n=}
\frac{\ex[Y_{n,\ell}]}{n+1}=\frac{2}{\ell+1}-\frac{1}{n+1}\Longrightarrow \frac{\ex[Y_{n,\ell}]}{n}=
(1+1/n)\left(\frac{2}{\ell+1}-\frac{1}{n+1}\right).
\end{equation}
Consider a generic subtree on $m\ge\ell$ vertices. Conditioned on its vertex set
 $\{p(i_1),\dots,p(i_m)\}$, $i_1<\cdots<i_m$, this subtree has the same distribution as the
 tree on $[m]$ grown from the uniformly random permutation of $[m]$. So, denoting 
 $S(p(i_1),\dots,p(i_m))$ the length of the shortest root-to-leaf path in this subtree, by 
 \eqref{pr(Smsmall)}, we have: uniformly for $m\ge\ell$,
\[
 \pr\bigl(S(p(i_1),\dots,p(i_m))>\alpha_0\log \ell\,\|\,p(i_1),\dots,p(i_m)\bigr)=1-O\bigl((\log \ell)^{-3/2}\bigr).
 \]
Let $Z_{n,\ell}$ denote the total number of the subtrees of size $m\ge\ell$ such that the
shortest root-to-leaf path has length exceeding $\alpha_0\log\ell$; clearly 
\[
\sum_{j\ge \alpha_0\log \ell}E_{n,k}\ge \ex[Z_{n,\ell}]. 
\]
From the above equation
it follows that
\[
\ex[Z_{n,\ell}\,\|\,Y_{n,\ell}]=\bigl[1-O\bigl((\log \ell)^{-3/2}\bigr)\bigr] Y_{n,\ell}.
\]
Combining this with  \eqref{Ynell/n=}, we obtain 
\[
\frac{\ex[Z_{n,\ell}]}{n}=\frac{2}{\ell+1}\bigl[1+O\bigl((\log \ell)^{-3/2}+n^{-1}\bigr)\bigr] .
\]
Therefore
\begin{align*}
\sum_{j\ge\alpha_0 \log \ell}c_j&=\lim_{n\to\infty}n^{-1}\sum_{j\ge\alpha_0\log \ell}E_{n,j}\\
&\ge \liminf_{n\to\infty}\frac{\ex[Z_{n,\ell}]}{n}\\
&=\frac{2}{\ell+1}\bigl[1+O\bigl((\log \ell)^{-3/2}\bigr)\bigr].
\end{align*}
Pick $k>0$ and set $\ell=\lceil e^{k/\alpha_0}\rceil$. Then the above estimate implies that
\[
\sum_{j>k}c_j\ge \frac{2}{\lceil e^{k/\alpha_0}\rceil+1}\bigl[1+O(k^{-3/2})\bigr]\ge
\frac{2}{3}e^{-k/\alpha_0}\bigl[1+O(k^{-3/2})\bigr].
\]
\end{proof}

{\bf Remark.\/} By Theorem \ref{1/3^n} and Theorem \ref{exp,lower} the radius of convergence of $\sum_k c_k x^k$ is in  the interval $[3,e^{1/0.373\dots}]$. What is the exact value of the radius? 
\section{Variations} 

Besides $E_{n,k}$, the expected counts of rank $k$ vertices, it is also natural to consider $F_{n,k}$ and $G_{n,k}$, the expected number of all pairs $(v, u)$, where $v$ is a vertex 
of rank $k$ and $u$ is a descendant leaf of $v$, and
 the expected number of all pairs $(v,u)$, where $v$ is a vertex 
of rank $k$ and $u$ is a {\it closest\/} descendant leaf of $v$.
 
Let us show  that, for each $k$, there exist finite 
\[
f_k=\lim_{n\to\infty}F_{n,k}/n,\quad g_k=\lim_{n\to\infty}G_{n,k}/n.
\]
 Consider $F_{n,k}$, for example. Introducing $f_{n,k}$, the expected product of the number of leaves of the random tree and the indicator of the event $\{\text{root rank}=k\}$, we have
\[
F_{n,k} = f_{n,k} +(1/n)\sum_{j=0}^{n-1} (F_{j,k}+F_{n-1-j,k}), \quad n>1.
\]
For  $n>0$, $f_{n,0}=0$;  for $k>0$,  using \eqref{Lj(x)},
\begin{equation}\label{(log n+1)^{k-1}}
\begin{aligned}
f_{n,k}&\le(n-1)\pr(\text{root rank}=k)\le n\ex[X_{n,k}]\\
&\le n 2^k [x^n] \frac{1}{k!}\left(\log\frac{1}{1-x}\right)^k
=2^k \frac{n}{n!}\, [y^{k-1}](y+1)\cdots(y+n-1)\\
&=2^k\frac{n (n-1)!}{n!}\left(\sum_{0<i_1<\cdots< i_{k-1}<n}\frac{1}{i_1\cdots i_{k-1}}\right)\\
&\le \frac{2^k}{(k-1)!}\left(\sum_{1\le i\le n-1}\frac{1}{i}\right)^{k-1}
\le\frac{2^k}{(k-1)!}\,(\log n+1)^{k-1}\\
&=O\bigl((\log n)^{k-1}\bigr).
\end{aligned}
\end{equation}
 So $x_n:=F_{n,k}$, $y_n:=f_{n,k}$ meet the
conditions of Lemma \ref{limexists} with $\eps\in (0,1)$. Consequently, for each $k$, there exists a finite $f_k:=\lim_{n\to\infty}F_{n,k}/n$.\\

To compute $f_k$, $g_k$, we need the recurrences similar to \eqref{calAk'}-\eqref{calBk'}. 
Introduce $f_{n,>k}=\sum_{j>k}f_{n,j}$,
and $\cal A_{k}(x)=\sum_{n\ge 1} x^n F_{n,k}$, $\cal B_{k}(x)=\sum_{n\ge 1}x^n f_{n,k}$,
and $\cal B_{>k}(x)=\sum_{n\ge 1}x^n f_{n,>k}$, where $f_{n,>k}:=\sum_{j>k}f_{n,j}$; so 
\begin{equation}\label{Bk=B(>k-1)-B(>k)}
\cal B_k(x)=\cal B_{>k-1}(x)-\cal B_{>k}(x).
\end{equation}
\begin{lemma}\label{F} For all nonnegative integers $k$, the following equalities hold. 
\begin{align}
\frac{d}{dx}\cal A_{k}(x)&=\frac{2}{1-x}\, \cal A_{k}(x)+\frac{d}{dx}\cal B_{k}(x),\label{F,eq1}\\
\frac{d}{dx}\cal B_{>k}(x)&=2\left(\frac{1}{1-x}-B_{\le k-1}(x)\right)\cal B_{>k-1}(x),
\label{F,eq2}
\end{align}
here $\{B_{\le t}(x)\}$ is the sequence determined by the recurrence \eqref{calBk'}, $B_{\le -1}(x):=0$, and $\cal B_{> -1}(x)=\cal B_{\ge 0}(x)$ is the generating function of the expected numbers of leaves, i. e.
\[
\cal B_{>-1}(x)=\frac{x-1}{3}+\frac{1}{3(1-x)^2}.
\]
Consequently
\begin{equation}\label{fk=}
f_k=2\int_0^1(1-x)\cal B_k(x)\,dx.
\end{equation}
\end{lemma}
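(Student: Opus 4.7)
The plan is to derive both \eqref{F,eq1} and \eqref{F,eq2} by translating recurrences for $F_{n,k}$ and $f_{n,>k}$ into generating-function identities, using the same conditional-independence argument that produced \eqref{calBk'}, and then to extract \eqref{fk=} from \eqref{F,eq1} by integration and a Tauberian step exactly as \eqref{ck=int} was deduced from \eqref{calAk'}.

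For \eqref{F,eq1} I would begin with the decomposition
\[
F_{n,k} = f_{n,k} + \frac{1}{n}\sum_{j=0}^{n-1}\bigl(F_{j,k} + F_{n-1-j,k}\bigr),\quad n>1,
\]
obtained by linearity, sorting pairs $(v,u)$ according to whether $v$ is the root (root contribution $\ex\bigl[L_n\,\mathbf 1(\text{root rank}=k)\bigr] = f_{n,k}$) or lies in one of the two root-subtrees, whose sizes are conditionally uniform on $\{0,\dots,n-1\}$. Multiplying by $n$, then by $x^{n-1}$, summing over $n\ge 2$, and using $\sum_{n\ge 1}\bigl(\sum_{j=0}^{n-1}F_{j,k}\bigr)x^{n-1} = \cal A_k(x)/(1-x)$ yields \eqref{F,eq1} after trivial rearrangement.

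For \eqref{F,eq2} I would condition on the left-subtree size $j$ and use two facts: (a) for $n>1$, the root has rank $>k$ iff every \emph{nonempty} child-subtree has root rank $>k-1$, and (b) $L_n$ is the sum of the two child-subtree leaf counts (with an empty subtree contributing $0$). Invoking conditional independence of the subtrees, and using the conventions $p_{0,>k-1}:=1$, $f_{0,>k-1}:=0$ to absorb the $j\in\{0,n-1\}$ boundary cases, one obtains
\[
n\,f_{n,>k} = 2\sum_{j=0}^{n-1} f_{j,>k-1}\,p_{n-1-j,>k-1},\quad n>1,\ k\ge 0.
\]
Multiplying by $x^{n-1}$ and summing, the left side becomes $\cal B_{>k}^\prime(x)$, while the right-hand Cauchy product becomes $2\cal B_{>k-1}(x)\cdot\bigl(\tfrac{1}{1-x} - B_{\le k-1}(x)\bigr)$, the second factor being $\sum_{m\ge 0}p_{m,>k-1}x^m$ as already computed in the derivation of \eqref{calBk'}; this is \eqref{F,eq2}. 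The base case $\cal B_{>-1}(x)$ follows on noting that $f_{n,>-1} = \ex[L_n]$: the recurrence $\ex[L_n] = (2/n)\sum_{j=0}^{n-1}\ex[L_j]$ translates to the ODE $\cal B_{>-1}^\prime(x) = 1 + \tfrac{2}{1-x}\cal B_{>-1}(x)$, which integrates (with integrating factor $(1-x)^2$) to the displayed closed form.

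Finally, I would extract \eqref{fk=} from \eqref{F,eq1} by treating it as a first-order linear ODE for $\cal A_k$: the integrating factor $(1-x)^2$ gives $\tfrac{d}{dx}\bigl[(1-x)^2\cal A_k(x)\bigr] = (1-x)^2\cal B_k^\prime(x)$, and integration by parts yields $\cal A_k(x) = \cal B_k(x) + 2(1-x)^{-2}\int_0^x(1-y)\cal B_k(y)\,dy$, so $\cal A_k(x)\sim 2(1-x)^{-2}\int_0^1(1-y)\cal B_k(y)\,dy$ as $x\uparrow 1$; the Tauberian theorem invoked for \eqref{ck=int}, combined with $F_{n,k}/n\to f_k$ (established via Lemma \ref{limexists}), produces \eqref{fk=}. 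The main obstacle is the boundary bookkeeping in \eqref{F,eq2}: one must carefully verify that the $j=0$ and $j=n-1$ terms, where a child-subtree is empty, are indeed captured by the conventions $p_{0,>k-1}=1$ and $f_{0,>k-1}=0$ (an empty left subtree forces the root to have only a right child, in which case "root rank $>k$" becomes "right-subtree root rank $>k-1$" and contributes exactly $f_{n-1,>k-1}$). Once this is checked, the remaining manipulations are formal and mirror those used earlier in the paper.
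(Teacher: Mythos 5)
Your proposal is correct and follows essentially the same route as the paper: the subtree decomposition with conditional independence, the conventions $p_{0,>k-1}=1$, $f_{0,>k-1}=0$ for the empty-subtree cases, the resulting recurrence $f_{n,>k}=\tfrac{2}{n}\sum_{j}f_{j,>k-1}\,p_{n-1-j,>k-1}$ translated into \eqref{F,eq2}, and the extraction of $f_k$ from \eqref{F,eq1} via the integrating factor $(1-x)^2$ and the Tauberian step. Your explicit ODE derivation of the base case $\cal B_{>-1}(x)$ and the boundary bookkeeping are slightly more detailed than the paper's, but the argument is the same.
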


Next, introduce $\widehat {A}_k(x)=\sum_{n\ge 1}x^n G_{n,k}$, $\widehat {B}_k(x)=\sum_{n\ge 1}
x^n g_{n,k}$, where $g_{n,k}:=\ex\left[1_{\{R(root)=k\}}\cal L_n\right]$ and $\cal L_n$ is the
number of leaves closest to the root of the tree.
\begin{lemma}\label{G}
The following equalities hold. 
\begin{align}
\frac{d}{dx}\widehat {A}_{k}(x)&=\frac{2}{1-x}\, \widehat {A}_{k}(x)+\frac{d}{dx}\widehat{B}_{k}(x),\quad (k\ge 0),\label{G,eq1}\\
\frac{d}{dx}\widehat {B}_k (x)&=2 \bigl[1+B_{\ge k-1}(x) \bigr]\widehat {B}_{k-1}(x), \quad (k> 0,\,\,\, \widehat{B}_0(x)=x). \label{G,eq2}
\end{align}
Consequently 
\begin{equation}\label{gk=}
g_k=2\int_0^1(1-x)\widehat{B}_k(x)\,dx.
\end{equation}
\end{lemma}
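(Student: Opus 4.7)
The strategy parallels that of Lemma~\ref{F}. First I would establish each generating-function identity via a combinatorial recurrence, then extract \eqref{gk=} from \eqref{G,eq1} by a Tauberian argument.

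For \eqref{G,eq1}, I would partition the rank-$k$ vertex / closest-descendant-leaf pairs by whether the rank-$k$ vertex $v$ is the root. If $v$ is the root the pair contributes $g_{n,k}$; if $v$ lies in the left or right subtree, of size $j$ or $n-1-j$, its closest descendant leaves lie in that same subtree, which is conditionally distributed as the random tree on that number of vertices. Averaging over the left subtree size gives
$$
G_{n,k}=g_{n,k}+\frac{1}{n}\sum_{j=0}^{n-1}\bigl(G_{j,k}+G_{n-1-j,k}\bigr),\qquad n>1,
$$
and the standard translation (multiply by $nx^{n-1}$, sum, use $G_{0,k}=0$) turns this into \eqref{G,eq1}, exactly as \eqref{A'B'kequation} was derived for $A_k$.

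The crux is \eqref{G,eq2}. Let $r_L,r_R$ be the ranks of the left/right subtree roots (omitting whichever subtree is empty) and $\widehat{\mathcal L}^L,\widehat{\mathcal L}^R$ their closest-leaves counts. The root has rank $k$ iff $\min(r_L,r_R)=k-1$, and in that case $\mathcal L_n$ is the sum of $\widehat{\mathcal L}^c$ over those children $c$ achieving the minimum. The indicator identity
$$
\mathbf{1}_{\{R(\text{root})=k\}}\,\mathcal{L}_n=\mathbf{1}_{\{r_L=k-1,\,r_R\ge k-1\}}\widehat{\mathcal L}^L+\mathbf{1}_{\{r_R=k-1,\,r_L\ge k-1\}}\widehat{\mathcal L}^R
$$
handles every case cleanly, including $r_L=r_R=k-1$ where both terms fire and add to $\widehat{\mathcal L}^L+\widehat{\mathcal L}^R$. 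Taking expectations, exploiting conditional independence of the two subtrees, and folding the $j=0,\,n-1$ boundary cases into the convolution via the conventions $g_{0,k-1}:=0$ and $p_{0,\ge k-1}:=1$, I obtain
$$
n\,g_{n,k}=2\sum_{j=0}^{n-1}g_{j,k-1}\,p_{n-1-j,\ge k-1},\qquad n>1,\ k\ge 1,
$$
whose passage to generating functions is precisely \eqref{G,eq2}, with $1+B_{\ge k-1}(x)$ being the ordinary generating function of the extended sequence $\{p_{n,\ge k-1}\}_{n\ge 0}$.

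For \eqref{gk=}, existence of $g_k:=\lim_{n\to\infty}G_{n,k}/n$ follows from Lemma~\ref{limexists} applied with $x_n=G_{n,k}$, $y_n=g_{n,k}$, since $g_{n,k}\le f_{n,k}=O((\log n)^{k-1})$ by \eqref{(log n+1)^{k-1}}. Solving \eqref{G,eq1} as a first-order linear ODE (with $\widehat{A}_k(0)=0$), in exact analogy with \eqref{calAk,calBk}, yields
$$
\widehat{A}_k(x)=\frac{1}{(1-x)^2}\left[(1-x)^2\widehat{B}_k(x)+2\int_0^x(1-y)\widehat{B}_k(y)\,dy\right]\sim\frac{2}{(1-x)^2}\int_0^1(1-y)\widehat{B}_k(y)\,dy
$$
as $x\uparrow 1$, and the Tauberian theorem applied to $\widehat{A}_k$ then delivers \eqref{gk=}. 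The main obstacle is the combinatorial case analysis for $\mathcal L_n$ when both children realize the minimum rank $k-1$; once the indicator identity above is in hand the rest reduces to convolution bookkeeping and the same ODE/Tauberian mechanism used in Lemma~\ref{F}.
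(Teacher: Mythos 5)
Your proposal is correct and follows essentially the same route as the paper: the same compact indicator identity $\mathbf 1_{\{R(\mathrm{root})=k\}}\mathcal L_n=\mathbf 1_{\{r_L=k-1,\,r_R\ge k-1\}}\widehat{\mathcal L}^L+\mathbf 1_{\{r_R=k-1,\,r_L\ge k-1\}}\widehat{\mathcal L}^R$ (which the paper merely writes as three disjoint terms before recombining), the same conditional-independence convolution with the boundary conventions $g_{0,k-1}=0$, $p_{0,\ge k-1}=1$, and the same ODE/Tauberian extraction of \eqref{gk=}. No gaps.
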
 
\begin{proof} {\bf (I)\/}   Let $\text{{\it root\/}}$ denote the root of the random tree $T_n$ on $[n]$. Let $L_n$ denote the total number of
leaves of  $T_n$. For $n\ge 2$,  $L_n = L^{\prime} +L^{\prime\prime}$ where $L^\prime$ and $L^{\prime\prime}$ denote the total number of leaves in the left subtree $T^\prime$ and  the right subtree $T^{\prime\prime}$ respectively. 
Let $\text{{\it root\/}}^\prime$ ($\text{{\it root\/}}^{\prime\prime}$ resp.) denote the root of $T^\prime$ ($T^{\prime\prime}$ resp.) if this subtree is non-empty. If both subtrees are non-empty, then 
\[
1_{\{R(\text{{\it root\/}}>k)\}}=1_{\{R(\text{{\it root\/}}^\prime) >k-1\}}\cdot 1_{\{R(\text{{\it root\/}}^{\prime\prime}) >k-1\}},\quad (k\geq 0).
\]
Let $0<j<n-1$. Now, conditioned on the event ``the vertex set of $T^\prime$ is a given
set $J$ of $j$ elements from $[n]\setminus \text{{\it root\/}}$'', the subtrees $T^\prime$ and $T^{\prime\prime}$ are independent, and  marginally distributed as $T_j$ and $T_{n-1-j}$, respectively. So
\begin{align*}
&\ex\bigl[1_{\{R(\text{{\it root\/}})>k\}}L_n\| J\bigr]=\ex\bigl[1_{\{R(\text{{\it root\/}}^\prime) >k-1\}}1_{\{R(\text{{\it root\/}}^{\prime\prime}) >k-1\}}(L^\prime+L^{\prime\prime})\| J\bigr]\notag\\
=&\ex\bigl[1_{\{R((\text{root of }T_j) >k-1\}}1_{\{R((\text{root of }T_{n-1-j}) >k-1\}}(L_j+L_{n-1-j})\bigr]\notag\\
=&\ex\bigl[1_{\{R((\text{root of }T_j) >k-1\}}L_j\bigr] \pr(R((\text{root of }T_{n-1-j}) >k-1))\notag\\
&+\ex\bigl[1_{\{R((\text{root of }T_{n-1-j}) >k-1\}}L_{n-1-j}\bigr] \pr(R((\text{root of }T_j) >k-1))\notag\\
=&f_{j,>k-1}\cdot p_{n-1-j,>k-1} +f_{n-1-j,>k-1} \cdot p_{j,>k-1}.
\end{align*}
where $p_{\nu,>k-1}:=\pr(R(\text{root of }T_{\nu}) >k-1)$. Setting $f_{0,>k-1}=0$, $p_{0,>k-1}=1$,
we see that the last equality holds for $j=0,\,n-1$ as well. Since $|J|$ is uniform on $\{0,\dots,n-1\}$, we obtain then
\[
f_{n,>k}=\ex\bigl[1_{\{R(\text{{\it root\/}})>k\}}L_n\bigr]=\frac{2}{n}\sum_{j=0}^{n-1}f_{j,>k-1}\cdot p_{n-1-j,>k-1}.
\]
It follows immediately that 
\[
\frac{d}{dx}\sum_{n\ge 1}f_{n,>k}x^n=2\left(\sum_{n\ge 0} p_{n,>k-1}x^n\right)\cdot
\left(\sum_{n\ge 1}f_{n,>k-1} x^n\right),
\]
which is equivalent to \eqref{F,eq2}, since 
\begin{align*}
\sum_{n\ge 0} p_{n,>k-1}x^n&=1+\sum_{n\ge 1}(1-p_{n,\le k-1})x^n\\
&=1+\frac{x}{1-x}-B_{\le k-1}(x)=\frac{1}{1-x}-B_{\le k-1}(x).
\end{align*} 
The equation \eqref{F,eq1} is implied by a simple recurrence
\[
F_{n,k}=f_{n,k} +\frac{2}{n}\sum_{j=0}^{n-1} F_{j,k},\quad (n\ge 2,\,k\ge 0).
\]
Finally, from the equation \eqref{F,eq1},
\begin{align*}
f_k&=\lim_{x\uparrow 1}(1-x)^2\cal A_k(x)\\
&=\int_0^1(1-x)^2\frac{d}{dx}\cal B_k(x)\,dx=2\int_0^1(1-x)\cal B_k(x)\,dx.
\end{align*}The proof of Lemma \ref{F} is complete.\\

{\bf (II)\/} Let us prove the equation \eqref{G,eq2}. Recall that $\cal L_n$ denotes the total number of
leaves {\it closest\/} to the root of $T_n$. For $n\ge 2$, let $\cal L^\prime$ and $\cal L^{\prime\prime}$ denote the total number of leaves in the left subtree $T^\prime$, empty or not,
 ($T^{\prime\prime}$ resp.)  closest to its root. Let $k>0$. If both
subtrees are non-empty, i. e. $0<j<n-1$, then
\begin{align*}
&1_{\{R(\text{{\it root\/}}=k)\}}\cal L_n=1_{\{R(\text{{\it root\/}}^\prime) =k-1\}}\cdot 1_{\{R(\text{{\it root\/}}^{\prime\prime}) =k-1\}}\cal L_n\\
&+1_{\{R(\text{{\it root\/}}^\prime) =k-1\}}\cdot 1_{\{R(\text{{\it root\/}}^{\prime\prime}) >k-1\}}\cal L_n
+1_{\{R(\text{{\it root\/}}^\prime)>k-1\}}\cdot 1_{\{R(\text{{\it root\/}}^{\prime\prime}) =k-1\}}\cal L_n\\
=&1_{\{R(\text{{\it root\/}}^\prime) =k-1\}}\cdot 1_{\{R(\text{{\it root\/}}^{\prime\prime}) =k-1\}}
(\cal L^\prime+\cal L^{\prime\prime})\\
&+1_{\{R(\text{{\it root\/}}^\prime) =k-1\}}\cdot 1_{\{R(\text{{\it root\/}}^{\prime\prime}) >k-1\}}\cal L^\prime
+1_{\{R(\text{{\it root\/}}^\prime)>k-1\}}\cdot 1_{\{R(\text{{\it root\/}}^{\prime\prime}) =k-1\}}\cal L^{\prime\prime}.
\end{align*}
The contribution of the first product on the last RHS to $\ex\bigl[1_{\{R(\text{{\it root\/}})=k\}}\cal L_n\| J\bigr]$ is 
\[
g_{j,k-1}\cdot p_{n-1-j,k-1}+g_{n-1-j,k-1}\cdot p_{j,k-1}.
\]
The total contribution of the second product and the third product is
\[
g_{j,k-1}\cdot p_{n-1-j,>k-1} +g_{n-1-j,k-1}\cdot p_{j,>k-1},
\]
so that
\[
\ex\bigl[1_{\{R(\text{{\it root\/}})=k\}}\cal L_n\| J\bigr]=g_{j,k-1}\cdot p_{n-1-j,\ge k-1}+
g_{n-1-j,k-1}\cdot p_{j,\ge k-1}.
\]
The last formula continues to hold for $j=0$ and $j=n-1$, if we set $p_{0,\ge \ell}=1$ for all
$\ell\ge 0$.
Consequently
\[
g_{n,k}=\ex\bigl[1_{\{R(\text{{\it root\/}})=k\}}\cal L_n\bigr]=\frac{2}{n}\sum_{j=0}^{n-1}g_{j,k-1}\cdot
p_{n-1-j,\ge k-1},
\]
and \eqref{G,eq2} follows immediately. And, as before, the equation \eqref{G,eq1} is the
direct consequence of 
\[
G_{n,k}=g_{n,k}+\frac{2}{n}\sum_{j=0}^{n-1}G_{j,k}, \quad (n\ge 2,\,k\ge 0).
\]
The equation \eqref{gk=} is proved like the equation \eqref{fk=}.
This completes the proof of Lemma \ref{G}.
\end{proof}
Introduce $\cal L_{n,k}$ and $\widehat{L}_{n,k}$, the total number of descendant leaves
of rank $k$ vertices and the total number of descendant leaves {\it closest\/} to rank $k$ vertices. Recalling the notation $V_{n,k}$ for the total number of rank $k$ vertices, we see that
$\cal L_{n,k}/V_{n,k}$ and $\widehat{L}_{n,k}/V_{n,k}$ are  the average numbers of 
descendant leaves and the closest descendant leaves per vertex of rank $k$.
\begin{theorem}\label{pervertex} For all nonnegative integers $k$, the following equalities hold. 
\[
\lim_{n\to\infty}\ex\left[\frac{\cal L_{n,k}}{V_{n,k}}\right] = \frac{f_k}{c_k},\quad 
\lim_{n\to\infty}\ex\left[\frac{\widehat L_{n,k}}{V_{n,k}}\right] = \frac{g_k}{c_k}.
\]
\end{theorem}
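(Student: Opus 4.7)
The plan is to treat the two ratios by the same strategy; I explain it for $\cal L_{n,k}/V_{n,k}$ (extended by $0$ when $V_{n,k}=0$), the other case being entirely analogous and in fact slightly easier because $\widehat L_{n,k}\le L_n\le n$. Since $\ex[\cal L_{n,k}]=F_{n,k}\sim f_kn$ and $\ex[V_{n,k}]=E_{n,k}\sim c_kn$, the heuristic is to simply replace the ratio by $F_{n,k}/E_{n,k}\to f_k/c_k$; making this rigorous requires concentrating $V_{n,k}/n$ around $c_k$ and controlling the rare events where $V_{n,k}$ is atypically small.

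The crucial new technical input is the second moment bound $\ex[\cal L_{n,k}^2]=O(n^2)$. I would prove it by setting up a recurrence for the expected number of ordered quadruples $((v_1,\ell_1),(v_2,\ell_2))$ with $R(v_i)=k$ and $\ell_i$ a descendant leaf of $v_i$, split according to whether $v_1$ and $v_2$ lie in the same root-subtree, in the spirit of the $E_{n,\bold k}^\prime/E_{n,\bold k}^{\prime\prime}$ decomposition in Theorem~\ref{mutind}. The ``separated'' contribution reduces via the standard convolution manipulation to a sum $\tfrac{2}{n}\sum_j F_{j,k}F_{n-1-j,k}$ of products of first moments and supplies the leading $f_k^2 n^2/3$; boundary cases where $v_1,v_2$ are related by an ancestor/descendant relation (or coincide), or where $\ell_1=\ell_2$, contribute only $O(n\log n)$ via the estimate $\cal E_n=O(n\log n)$ from the proof of Theorem~\ref{mutind}. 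The same recipe applied to $\widehat L_{n,k}$ gives $\ex[\widehat L_{n,k}^2]=O(n^2)$.

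Armed with this bound, I fix $\delta>0$ and set $A_n=\{|V_{n,k}/n-c_k|\le \delta\}$ and $B_n=\{V_{n,k}\ge 0.03\, n/k!\}$; Corollary~\ref{concentr} gives $\Pr(A_n^c)=o(1)$ while Lemma~\ref{Lnk>epsn} gives $\Pr(B_n^c)\le e^{-cn^{1-\delta}}$. I then decompose
\[
\ex\!\left[\frac{\cal L_{n,k}}{V_{n,k}}\right]=\ex\!\left[\frac{\cal L_{n,k}}{V_{n,k}}\mathbf{1}_{A_n}\right]+\ex\!\left[\frac{\cal L_{n,k}}{V_{n,k}}\mathbf{1}_{A_n^c\cap B_n}\right]+\ex\!\left[\frac{\cal L_{n,k}}{V_{n,k}}\mathbf{1}_{B_n^c}\right].
\]
On $A_n$ the ratio is bracketed between $(c_k+\delta)^{-1}\cal L_{n,k}/n$ and $(c_k-\delta)^{-1}\cal L_{n,k}/n$, and Cauchy--Schwarz together with $\ex[\cal L_{n,k}^2]=O(n^2)$ and $\Pr(A_n^c)=o(1)$ shows $\ex[\cal L_{n,k}/n\cdot \mathbf{1}_{A_n^c}]=o(1)$, so the first term lies between $f_k/(c_k+\delta)+o(1)$ and $f_k/(c_k-\delta)+o(1)$. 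On $A_n^c\cap B_n$ the lower bound on $V_{n,k}$ yields $\cal L_{n,k}/V_{n,k}\le (k!/0.03)\cdot \cal L_{n,k}/n$, and a second Cauchy--Schwarz gives $o(1)$. On $B_n^c$ the crude deterministic bound $\cal L_{n,k}/V_{n,k}\le n^2$ combined with the super-polynomial decay of $\Pr(B_n^c)$ gives $o(1)$. Letting $\delta\downarrow 0$ then yields $\ex[\cal L_{n,k}/V_{n,k}]\to f_k/c_k$, and the identical argument with $\widehat L_{n,k}$ gives $\ex[\widehat L_{n,k}/V_{n,k}]\to g_k/c_k$.

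The hard part will be the second moment bound: although the first-moment equations in Lemma~\ref{F} are clean and linear, the recursion for $\ex[\cal L_{n,k}^2]$ couples to itself quadratically and to $F_{n,k}$ through the shared-subtree case, so most of the technical work will be a book-keeping argument to identify the leading $f_k^2n^2$ behaviour and to dispatch the boundary/overlap contributions as $o(n^2)$.
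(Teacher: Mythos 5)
Your overall architecture coincides with the paper's: split the expectation over the three events $\{V_{n,k}<0.03\,n/k!\}$, $\{V_{n,k}\ge 0.03\,n/k!,\ |V_{n,k}/n-c_k|>\eps\}$, and the typical event, kill the first with Lemma~\ref{Lnk>epsn} and the second with Corollary~\ref{concentr}, and read off $f_k/c_k$ on the third. Where you diverge is in how the numerator is controlled on the bad events: the paper disposes of everything with the one-line deterministic bound $\cal L_{n,k}\le n$, whereas you introduce the second-moment estimate $\ex[\cal L_{n,k}^2]=O(n^2)$ plus Cauchy--Schwarz. Your instinct here is in fact sound: writing $\cal L_{n,k}=\sum_{v:R(v)=k}D(v)$ with $D(v)$ the number of descendant leaves of $v$, a leaf can have several rank-$k$ ancestors (a caterpillar makes $\cal L_{n,1}=\Theta(n^2)$), so the pair count is \emph{not} bounded by $n$ and some substitute really is needed; and your observation that $\widehat L_{n,k}\le L_n\le n$ (distinct rank-$k$ vertices cannot share a distance-$k$ descendant leaf) correctly isolates the second ratio as the easy case.

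The genuine gap is in your justification of the second-moment lemma, namely the claim that the ancestor/descendant (and coincident) pairs ``contribute only $O(n\log n)$ via $\cal E_n=O(n\log n)$''. That estimate counts unweighted \emph{vertex} pairs $v_1\succeq v_2$; in your quadruple count each such pair carries the weight $D(v_1)D(v_2)$, which a single pair can make as large as $\Theta(n^2)$ (take $v_1=v_2=$ root on the event that the root has rank $k$), so the related contribution is not majorized by $\cal E_n$. It is indeed $o(n^2)$, but for a different reason: a vertex whose subtree has $m$ vertices has rank $k$ with probability $O\bigl((\log m)^{k-1}/m\bigr)$ --- exactly the bound \eqref{(log n+1)^{k-1}} --- which beats the attached weight of order $m^2\cdot(\text{height})$; summing against the $\approx 2n/m^2$ expected subtrees of size $m$ from \eqref{Ynell/n=} gives $O\bigl(n(\log n)^{k+1}\bigr)$. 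Equivalently, in your recurrence the boundary term at each level is $2\ex\bigl[D(\mathrm{root})\mathbf 1_{\{R(\mathrm{root})=k\}}\cal L_{n,k}\bigr]$ and must be shown to be $O(n^{2-\eps})$ using that same root-rank estimate; this step is missing. Once supplied, the lemma holds (indeed $\ex[\cal L_{n,k}^2]\sim f_k^2n^2$) and the rest of your argument goes through. Note also that on the middle event you need $\ex[\cal L_{n,k}^2]^{1/2}\pr(A_n^c)^{1/2}=o(n)$, so a cruder bound such as $O(n^2\log^2 n)$ obtained from height arguments would not suffice with only the qualitative $\pr(A_n^c)=o(1)$ of Corollary~\ref{concentr}: you genuinely need the clean $O(n^2)$.
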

\begin{proof} Consider  $\cal L_{n,k}/V_{n,k}$ for instance. Observe first that $\cal L_{n,k}\le n$. Now, for $a=1/k!$ and $\eps>0$, write
\begin{align*}
\ex\left[\frac{\cal L_{n,k}}{V_{n,k}}\right]&=\ex\left[\frac{\cal L_{n,k}}{V_{n,k}}1_{\{V_{n,k}<0.03an\}}\right]
+\ex\left[\frac{\cal L_{n,k}}{V_{n,k}}1_{\{V_{n,k}\ge 0.03an\}}1_{\{|V_{n,k}/n-c_k|>\eps\}}\right]\\
&\quad+\ex\left[\frac{\cal L_{n,k}}{V_{n,k}}1_{\{V_{n,k}\ge 0.03an\}}1_{\{|V_{n,k}/n-c_k|\le\eps\}}\right]\\
&=E_1+E_2+E_3.
\end{align*}
Here, by Lemma \ref{Lnk>epsn} and Corollary \ref{concentr} respectively,
\[
E_1 \le n e^{-0.01an^{1-\delta}} \to 0,\quad E_2=O\bigl(\pr(|V_{n,k}/n-c_k|>\eps)\bigr)\to 0,
\]
as $n\to\infty$, and
\begin{align*}
E_3&=\frac{1}{n(c_k +O(\eps))}\ex\left[\cal L_{n,k}1_{\{V_{n,k}\ge 0.03an\}}1_{\{|V_{n,k}/n-c_k|\le\eps\}}\right]\\
&=\frac{\ex[\cal L_{n,k}/n]}{c_k+O(\eps)}\left[1 +O\bigl(\pr(V_{n,k}<0.03an)+\pr(|V_{n,k}/n-c_k|>\eps)\bigr)
\right].
\end{align*}
Therefore
\[
\lim_{\eps\downarrow 0}\limsup_{n\to\infty} E_3=\lim_{\eps\downarrow 0}\liminf_{n\to\infty} 
E_3=f_k/c_k.
\]
So $\lim_{n\to\infty}\ex[\cal L_{n,k}/V_{n,k}] = f_k/c_k$.
\end{proof}

\noindent {\bf Note.\/} A slight modification of the proof of Corollary \ref{concentr} shows that, in probability,
$\cal L_{n,k}/n\to f_k$ and $\widehat{L}_{n,k}/n\to g_k$. Therefore
$\ex\bigl[\cal L_{n,k}/V_{n,k}\bigr]\to f_k/c_k$ and $\ex\bigl[\widehat L_{n,k}/V_{n,k}\bigr]\to g_k/c_k$ in probability as well.\\

Using Maple to integrate the differential equations \eqref{F,eq2} and \eqref{G,eq2}, we computed $\{f_j\}_{j\le 2}$ and $\{g_j\}_{j\le 2}$ via \eqref{fk=} and \eqref{gk=} respectively:
\begin{equation}\label{fk=,gk=}
\begin{aligned}
&f_0=\frac{1}{3}, \quad f_1 =\frac{17}{30}, \quad f_2=\frac{152389}{170100};\\
&g_0=\frac{1}{3}, \quad g_1=\frac{1}{3}, \quad\,\,\,\, g_2=\frac{49}{180}.
\end{aligned}
\end{equation}
Therefore
\begin{equation}\label{fk/ck=,gk/ck=}
\begin{aligned}
&\frac{f_0}{c_0}=1,\quad \frac{f_1}{c_1}=\frac{17}{9},\quad \frac{f_2}{c_2}=\frac{152389}{36141};\\
&\frac{g_0}{c_0}=1,\quad \frac{g_1}{c_1}=\frac{10}{9},\quad \frac{g_2}{c_2}=\frac{2205}{1721}.
\end{aligned}
\end{equation}
{\bf Remarks.\/} {\bf (i)\/} That $g_0$, $g_1$ should both be $1/3$ follows from the observation that, for
$n\geq 2$, the number of pairs $(v,u)$, where $v$ is a rank $k$ vertex and $u$ is its closest
descendant-leaf, is the same number of all leaves when $k=0$ or $k=1$. {\bf (ii)\/} The data
suggest that both $f_k/c_k$ and $g_k/c_k$ increase with $k$, albeit at a slower rate for $g_k/c_k$.

\section{Numerics and gap-free factorization conjecture} 

In conclusion we present some intriguing experimental data on number-theoretic properties of $\{c_k\}$. Recall that
\begin{equation}\label{ck=int}
\sum_{j=0}^{k} c_j=2\int_0^1(1-y)B_{\le k}(y)\,dy.
\end{equation}
Then, using using  \eqref{calBk'} ,
\begin{equation}\label{1integ}
\begin{aligned}
\int_0^1(1-y)B_{\le k}(y)\,dy&=\frac{1}{2}\int_0^1(1-y)^2 B_{\le k}(y)^\prime\,dy\notag\\
&=\frac{1}{2}\int_0^1\left[2-2y+y^2-\bigl(1-(1-y) B_{\le k-1}(y)\bigr)^2\right].
\end{aligned}
\end{equation}
so 
\begin{equation}\label{ck=int,better}
\sum_{j=0}^{k}c_j=\int_0^1\left[2-2y+y^2-\bigl[1-(1-y) B_{\leq k-1}(y)\bigr]^2\right]\,dy.
\end{equation}
The equation \eqref{ck=int,better} allows to compute $c_{k}$ directly through $B_{\le k-1}(x)$,
without knowing $B_k(x)$.

Using this simplification, we have obtained the exact values of $c_4$ and $c_5$. 
That is, we have computed that $c_4$ equals
\[\frac{122058464141653662196290113232646304412999902283512425580156787323}{3353377025022449199852900725670960067418280803797231788288000000000},\]
a fraction whose denominator has 67 digits, and whose approximate value is 0.0364. Combining this with the values
of $c_i$ for $i\leq 4$, we see that (with high probability) about 99.14 percent of all vertices are of rank four or less. 

The prime factorization of the denominator $denom (c_4)$, when $c_4$ is written in simplest terms, obtained by Maple, is even more interesting, since its factorized representation 
is 
\[denom(c_4)=  2^{17}\cdot 3^{18} \cdot 5^9 \cdot 7^8 \cdot 11^8 \cdot 13^7 \cdot 17^6 \cdot 19^5 \cdot 23^4 \cdot 29^2
\cdot 31.\]
So the largest prime divisor of  $denom(c_4)$ is 31, which is a tiny number compared to $denom(c_4)$. Even more striking is the fact
that $denom(c_4)$ is divisible by {\em every prime} up to 31. In stark contrast, 
the numerator of $c_4$, while comparable in size to the denominator,  is the product of just 
{\it two\/} primes, the smaller of which is  $232196467$. 

This surprising fact warrants a second look at the numbers $c_k$ for $k\leq 3$, already computed in B\'ona \cite{protected}. Here is the factorized representation of the denominators, 
including $denom(c_4)$:
\begin{itemize}
\item $denom(c_0)=3$,
\item $denom(c_1)=2\cdot 5$,
\item $denom(c_2)=2^2 \cdot 3^4 \cdot 5^2$, 
\item $denom(c_3)=2^8 \cdot 3^7 \cdot 5^5 \cdot 7^3 \cdot 11^3 \cdot 13^2 \cdot 17$, and
\item $denom(c_4)= 2^{17}\cdot 3^{18} \cdot 5^9 \cdot 7^8 \cdot 11^8 \cdot 13^7 \cdot 17^6 \cdot 19^5 \cdot 23^4 \cdot 29^2 \cdot 31$.
\end{itemize}
So  all $denom(c_k)$ for $k\leq 4$ have very small prime divisors. With the exception of $k=0$ and $k=1$, 
it seems that the prime divisors of $denom(c_k)$ are {\em precisely} the first $t$ prime numbers for some $t$. Those two exceptions may be a reflection of
how relatively simple the counting of leaves and their fathers is.

Even though the computation of $c_4$ was already exceptionally time-\linebreak consuming, we decided
to compute the next value $c_5$. This task turned out to be so problematic that time and again we were
tempted to give up. Mobilizing all the insight into the algebraic form of the functions $B_j(x)$, we eventually got the answer. The approximate value of 
$c_5$ is 0.0074.  So--with high probability--about 99.875 percent of all vertices are of rank five or less.  The number $denom(c_5)$ has $274$ digits, and its prime factorization is
\[2^{48}\cdot 3^{42} \cdot 5^{28} \cdot 7^{18} \cdot 11^{16} \cdot 13^{16} \cdot 17^{17} \cdot 19^{16} \cdot 23^{15} \cdot 29^{12} \cdot 31^{12} \cdot 37^{10} \cdot 41^9 \cdot 43^8 \cdot 47^7 \cdot 53^5 \cdot 59^3 \cdot 61^2.\]
If not for this strikingly simple factorization, we would not dare to type in the $274$-long monster.
So yet again, $denom(c_k)$ has only very small prime factors, and it is divisible by every prime up to its largest prime
factor, $61$. (As for the numerator, its smallest prime divisor must be extremely large as 
Maple-based factorization algorithm failed the task.)

 Based on these data points, we formulate the following conjectures. 

\begin{conjecture} \label{firstcon} Let $denom(c_k)$ be the denominator of $c_k$ when $c_k$ is written in smallest terms. 
Then the largest prime divisor of the denominator is at most as large as some relatively slowly growing function of $k$, possibly
$2^{k+1}+1$. 
\end{conjecture}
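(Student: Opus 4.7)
}

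The plan is to exploit the recurrence \eqref{calBk'} to pin down the algebraic structure of $B_k(x)$ as a polynomial in $x$ and $u := \log(1/(1-x))$, establish the sharp degree bound $\deg_x B_k \le 2^{k+1}-1$, and then evaluate $c_k = 2\int_0^1 (1-y) B_k(y)\,dy$ via a Laplace-type substitution. The arithmetic bound will drop out directly.

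\textbf{Structural lemma.} By induction on $k$, I would show that $B_k(x)$ is a polynomial in $x$ and $u$ with rational coefficients, and $\deg_x B_k \le 2^{k+1}-1$. The base case is $B_0 = x$. For the inductive step I rearrange \eqref{calBk'} as
\[
(1-x)\,B_k'(x) = 2B_{k-1}(x) - (1-x)\bigl[2B_{k-1}(x)B_{\le k-2}(x)+B_{k-1}(x)^2\bigr] =: P_k(x,u),
\]
so by the inductive hypothesis $P_k$ is a polynomial in $x,u$ with $\deg_x P_k \le 1 + 2(2^k-1) = 2^{k+1}-1$. To recover $B_k$, I split $P_k(x,u) = P_k(1,u) + (x-1)Q_k(x,u)$ where $Q_k$ has $x$-degree $\deg_x P_k - 1$; then, using $du = dx/(1-x)$,
\[
B_k(x) = \int_0^{u(x)} P_k(1,v)\,dv \; - \; \int_0^x Q_k(t,u(t))\,dt.
\]
The first integral is a polynomial in $u$ alone, while the second, via repeated integration by parts together with the identity $y^{n+1}/(1-y) = 1/(1-y) - (1+y+\cdots+y^n)$, is a polynomial in $x,u$ of $x$-degree $\le \deg_x Q_k + 1 = \deg_x P_k$. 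This closes the induction.

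\textbf{Arithmetic step.} A parallel induction on the $u$-degree $m_k := \deg_u B_k$ gives $m_k \le 2^k - 1$, and inspection of the integration above shows that denominators appearing in the coefficients of $B_k$ come only from factors $1/(n+1)$ with $n+1 \le 2^{k+1}-1$ and $1/(m+1)$ with $m+1 \le 2^k-1$; in particular, every prime dividing any coefficient denominator of $B_k$ is at most $2^{k+1}-1$. Now for the final integration, the Laplace substitution $w = -\log(1-y)$ applied to $c_k = 2\int_0^1 (1-y)B_k(y)\,dy$ gives, term by term,
\[
\int_0^1 (1-y)\,y^n u^m\,dy = \int_0^\infty e^{-2w}(1-e^{-w})^n w^m\,dw = \sum_{i=0}^n \binom{n}{i}(-1)^i\,\frac{m!}{(i+2)^{m+1}},
\]
whose denominator factors $(i+2)^{m+1}$ satisfy $i+2 \le n+2 \le 2^{k+1}+1$. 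Combining this with the bound on the coefficients of $B_k$, every prime factor of $denom(c_k)$ in lowest terms is at most $2^{k+1}+1$, as asserted.

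\textbf{Main obstacle.} The delicate content is the sharp $x$-degree bound in the structural lemma, and in particular the assertion that antidifferentiation of $y^n u^m$ raises the $x$-degree by exactly one — this requires the clean split into $P_k(1,u)$ (converting the $(1-x)^{-1}$ pole into a pure $du$) plus a polynomial remainder $Q_k$, and then a careful induction on $m$ using the $y^{n+1}/(1-y)$ identity. A secondary but necessary bookkeeping concern is verifying that no unexpectedly large prime sneaks into a coefficient of $B_k$ during the iterated integrations; this is handled by tracking $m_k$ alongside $\deg_x B_k$ and noting that all denominator factors introduced in the recursive step are at most $\max(2^{k+1}-1,\,2^k-1) = 2^{k+1}-1$.
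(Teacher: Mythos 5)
Your proposal is correct and follows essentially the same route as the paper: induction on $k$ via the recurrence \eqref{A'B'kequation}/\eqref{calBk'} to show $B_k$ is a polynomial in $\log\frac{1}{1-x}$ and a linear function of $x$ with both degrees at most $2^{k+1}-1$ and coefficient denominators free of primes exceeding $2^{k+1}-1$, followed by the closed-form evaluation of $\int_0^1(1-y)B_k(y)\,dy$ yielding the bound $2^{k+1}+1$. The only real difference is cosmetic: the paper expands in powers of $(1-x)$ rather than $x$, which makes the antidifferentiation step a one-line integration by parts (introducing only the factor $\tfrac{1}{b+1}$) and avoids your split $P_k(1,u)+(x-1)Q_k$ and the geometric-series identity.
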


\begin{conjecture} \label{secondcon} Let $k\geq 2$, and let $p_k$ be largest prime divisor of  $denom(c_k)$. Then $denom(c_k)$ is divisible by every prime less than $p_k$. 
\end{conjecture}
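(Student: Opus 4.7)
My approach combines the explicit integral representation in \eqref{ck=int,better} with a $q$-adic valuation analysis for each prime $q \le p_k$. First, by induction on $k$ using the recurrence \eqref{calBk'}, I would establish that $B_{\le k}(x)=Q_k(x)/(1-x)^{d_k}$ for a polynomial $Q_k\in\mathbb{Q}[x]$ of controlled degree and an explicit exponent $d_k$ growing roughly like $2^k$; this is essentially the bookkeeping already required for the proven bound $p_k\le 2^{k+1}+1$ in Conjecture \ref{firstcon}. Substituting $u=1-y$ in \eqref{ck=int,better} converts the integrand into a Laurent polynomial in $u$ whose exponents and coefficient denominators are both read off from $Q_{k-1}$ (and, via differencing $\sum_{j\le k}c_j-\sum_{j\le k-1}c_j$, also from $Q_{k-2}$). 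Termwise integration $\int_0^1 u^n\,du=1/(n+1)$ then expresses $c_k$ as an explicit $\mathbb{Q}$-linear combination of reciprocals $1/(n+1)$ with $n+1\le 2^{k+1}+1$, identifying the candidate prime divisors of $\text{denom}(c_k)$.

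Next, for each prime $q\le p_k$, I would try to prove $v_q(\text{denom}(c_k))<0$ by isolating one surviving contribution of the form $A/q^e$ with $v_q(A)=0$. The natural candidate is the coefficient of $u^{q-1}$ (or of $u^{mq-1}$ for a small multiplier $m$) in the expansion of
\[
\bigl[1-(1-y)B_{\le k-1}(y)\bigr]^2 - \bigl[1-(1-y)B_{\le k-2}(y)\bigr]^2.
\]
Writing this difference as a convolution of coefficients of $(1-y)B_{\le k-1}$ and $(1-y)B_{\le k-2}$, one can pick out a dominant summand (for instance the product of the constant and leading terms of $Q_{k-1}$), and try to show it is not balanced by the other summands. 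The cleanest way to formalize this would be an auxiliary inductive statement: that $v_q$ of the denominators of the coefficients of $Q_k$ is monotone in $k$ once $q$ is not too large relative to $k$, which would reduce the conjecture to a finite base case for each $q$.

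The main obstacle, as in most gap-freeness conjectures of this flavor, is ruling out cancellations. Even once every candidate denominator $n+1\le p_k$ is known to arise, the attached rational coefficients can in principle conspire to kill a given prime $q$, and the delicate numerics visible already at $c_4$ and $c_5$ (e.g.\ the steep drop in $v_q$ between $q=2,3$ and $q=p_5=61$) suggests that gap-freeness does not come from a uniform estimate but from a structural property of the coefficients of $Q_k$ — perhaps a log-concavity or unimodality of their $q$-adic valuations as $k$ varies. Identifying and proving such a property, likely guided by computing $c_6$ and beyond to test candidate invariants, is where I expect the real difficulty to lie; absent it, the conjecture seems to resist any direct attack even with the integral formula \eqref{ck=int,better} in hand.
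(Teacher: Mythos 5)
This statement is Conjecture \ref{secondcon}, which the paper explicitly leaves \emph{unproved}: the authors write that they can prove Conjecture \ref{firstcon} but not Conjecture \ref{secondcon}, and the reason they give is precisely the obstacle you arrive at in your last paragraph --- the numerator of $c_k$ is a huge sum and nobody can rule out that it absorbs a given prime $q$ to as high a power as the denominator. So there is no proof in the paper to compare against, and your proposal, as you yourself acknowledge, is a research plan rather than a proof; the cancellation problem you defer is not a technical loose end but the entire content of the conjecture. As it stands the proposal does not establish the statement.

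Beyond that, the structural ansatz on which the plan rests is incorrect. You posit $B_{\le k}(x)=Q_k(x)/(1-x)^{d_k}$ with $Q_k$ a polynomial, but already $B_1(x)=2\log\tfrac{1}{1-x}-2x-x^3/3$ contains a logarithm, and the recurrence \eqref{calBk'} squares and then integrates, so powers of $\log\tfrac{1}{1-x}$ accumulate: the correct structure is the class ${\bf PL}$ of bivariate polynomials in $(1-x)$ and $\log\tfrac{1}{1-x}$ (Lemma 5.3 and Lemma \ref{upperbounds}). Consequently the termwise integration in \eqref{ck=int,better} does not produce reciprocals $1/(n+1)$ but rather
\[
\int_0^1 (1-x)^b\left(\log\frac{1}{1-x}\right)^c dx=\frac{c!}{(b+1)^{c+1}},
\]
i.e.\ the paper's \eqref{Ibc}. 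The high powers of small primes actually observed, e.g.\ $2^{48}$ and $3^{42}$ in $denom(c_5)$, come from these $(b+1)^{c+1}$ factors and from the factorials $c!$, neither of which your Laurent-polynomial model can generate (it would bound every $q$-adic valuation of the denominator by roughly the number of terms with $q\mid n+1$, which is far too small). Any $q$-adic analysis of the kind you describe would have to be redone on this richer family of terms, and the candidate ``dominant summand'' you single out is not well defined once the log powers are present.
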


Perhaps it is also true that the smallest prime divisor of the numerator of $c_k$ grows 
super-exponentially with $k$, but we hesitate to make any specific guess.  We are able to prove Conjecture \ref{firstcon} but not Conjecture \ref{secondcon}. The reason the second conjecture is out of reach for now is simple: the 
numerator of $c_k$ is a sum of a very large set of summands, and we are unable to prove that  sum will not be
divisible by at least as high a power of a given prime $p$ as the denominator of $c_k$ is. 

In order to prove Conjecture \ref{firstcon}, we will need a few simple technical lemmas.  Recall that 
$B_k(x)$ denotes the exponential generating function for the numbers of trees on vertex set $[n]$ whose root is 
of rank $k$. The first two  examples are $B_0(x)=x$, and $B_1(x)=2\log(1/(1-x)-2x-x^3/3$.

\begin{lemma} For all natural numbers $k$, we have $B_k(x) \in {\bf PL}$, meaning that
$B_k(x)$ is a bivariate polynomial $P_k(u,v)$, at $u=(1-x)$, $v=\log 1/(1-x)$.
\end{lemma}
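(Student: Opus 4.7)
My plan is induction on $k$, using the recurrence \eqref{A'B'kequation} that expresses $B_k^\prime(x)$ polynomially in $B_0(x), \ldots, B_{k-1}(x)$ and $1/(1-x)$. Write $u = 1 - x$ and $v = \log(1/(1-x)) = -\log u$, so $\mathbf{PL} = \mathbb{Q}[u, v]$. The base case $k = 0$ is immediate, since $B_0(x) = x = 1 - u \in \mathbf{PL}$; for a sanity check, expanding the given $B_1(x) = 2\log(1/(1-x)) - 2x - x^3/3$ via $x = 1 - u$ yields $2v - 7/3 + 3u - u^2 + u^3/3 \in \mathbf{PL}$.

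The engine of the induction will be an \emph{integration fact}: the chain rule $d/dx = -\partial_u + u^{-1}\partial_v$ sends $\mathbf{PL}$ into the space of finite Laurent sums $\sum a_{ij} u^i v^j$ with $i \ge -1$ and $j \ge 0$, and conversely every such sum admits an antiderivative (with respect to $x$) in $\mathbf{PL}$. I would prove the converse termwise: for $v^j/u$, the substitution $du/u = -dv$ gives
\[
\int \frac{v^j}{u}\,dx = -\int \frac{v^j}{u}\,du = \int v^j\,dv = \frac{v^{j+1}}{j+1};
\]
for $u^i v^j$ with $i \ge 0$, integration by parts in $u$ yields the reduction
\[
\int u^i v^j\,du = \frac{u^{i+1} v^j}{i+1} + \frac{j}{i+1}\int u^i v^{j-1}\,du,
\]
which terminates at $j = 0$ in the elementary antiderivative $u^{i+1}/(i+1)$. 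Both outputs are polynomials in $u$ and $v$.

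For the inductive step, I assume $B_0, \ldots, B_{k-1} \in \mathbf{PL}$. The recurrence
\[
B_k^\prime(x) = \frac{2B_{k-1}(x)}{u} - 2B_{k-1}(x)\sum_{j=0}^{k-2}B_j(x) - B_{k-1}(x)^2
\]
exhibits $B_k^\prime(x)$ as a $\mathbf{PL}$ expression plus $2u^{-1}$ times a $\mathbf{PL}$ expression; hence every monomial $u^i v^j$ appearing in $B_k^\prime(x)$ has $i \ge -1$, so the hypothesis of the integration fact is met. It then yields a primitive $P_k(u, v) \in \mathbf{PL}$, and adjusting the additive constant so that $P_k(1, 0) = 0$ (which enforces $B_k(0) = 0$) gives $B_k(x) \in \mathbf{PL}$, closing the induction.

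The only structural subtlety is that $1/(1-x) = 1/u$ enters the recurrence only linearly, so the pole at $u = 0$ of $B_k^\prime(x)$ never deepens past order one; a hypothetical quadratic $1/(1-x)^2$ would invalidate the integration fact. Granting that observation, no serious obstacle remains and the rest of the argument is bookkeeping.
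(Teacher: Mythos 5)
Your proof is correct. The paper itself disposes of this lemma by citing Lemma 4.1 of \cite{protected}, so there is no in-text argument to diverge from; but your induction --- integrate the recurrence \eqref{A'B'kequation} termwise, noting that $1/(1-x)$ enters only linearly so the pole in $u$ never exceeds order one, and that $\int u^iv^j\,dx$ reduces into ${\bf PL}$ by parts --- is precisely the mechanism the paper spells out afterwards in Lemma \ref{induction} and the proof of Lemma \ref{upperbounds}, so it is essentially the intended proof, just written out.
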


\begin{proof} See Lemma 4.1 in \cite{protected}. \end{proof}

It is also proved in \cite{protected} that the class {\bf PL} is closed under integration. In fact, the following, 
stronger statement is true. 

\begin{lemma} \label{induction} 
Let $b$ and $c$ be non-negative integers, and let us write \[\int (1-x)^b \log \left (\frac{1}{1-x} \right) ^c \ dx
= \sum_{i=1}^{m}  a_i  (1-x)^{b_i} \log \left ( \frac{1}{1-x} \right)^{c_i} ,\]
with the rational numbers $a_i$ written in their simplest form. Then for all $i$, the denominator of $a_i$ has no 
prime divisor larger than $b+1$. 
\end{lemma}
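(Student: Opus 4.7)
The plan is to induct on the exponent $c$, using integration by parts to reduce $c$ to $c-1$ while keeping the exponent $b$ of $(1-x)$ fixed. The key identity, obtained by taking $u = \log(1/(1-x))^c$ and $dv = (1-x)^b\,dx$, is
\[
\int (1-x)^b \log\!\left(\tfrac{1}{1-x}\right)^{\!c}\,dx = -\frac{(1-x)^{b+1}}{b+1}\log\!\left(\tfrac{1}{1-x}\right)^{\!c} + \frac{c}{b+1}\int (1-x)^b \log\!\left(\tfrac{1}{1-x}\right)^{\!c-1}\,dx,
\]
valid since $\frac{d}{dx}\log(1/(1-x))^c = \frac{c}{1-x}\log(1/(1-x))^{c-1}$, and the extra factor of $(1-x)^{-1}$ is absorbed by one power of $(1-x)$ from the antiderivative of $(1-x)^b$.

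For the base case $c=0$, the antiderivative is $-(1-x)^{b+1}/(b+1)$, whose sole coefficient $-1/(b+1)$ has denominator $b+1$, and every prime divisor of $b+1$ is at most $b+1$. For the inductive step, assume the statement for exponent $c-1$; then every coefficient in $\int(1-x)^b\log(1/(1-x))^{c-1}\,dx$, when written in lowest terms, has denominator whose prime divisors are all $\le b+1$. The first summand on the right side of the boxed identity contributes a single term with coefficient $-1/(b+1)$, which is fine. Each coefficient in the second summand becomes $(c/(b+1))\cdot a_i'$, and after cancellation its denominator divides $(b+1)\cdot(\text{denominator of }a_i')$; hence its prime divisors lie in the union of primes of $b+1$ and primes of the denominators of $a_i'$, all bounded by $b+1$. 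Because $c$ sits in the numerator, it can only cancel factors, never introduce new primes in the denominator.

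There is no substantive obstacle: the only subtlety is to observe that combining terms of the same monomial type $(1-x)^{b_i}\log(1/(1-x))^{c_i}$ may cause cancellations, but merging rationals whose denominators have prime divisors $\le b+1$ yields a rational whose reduced denominator still has prime divisors $\le b+1$. So the induction closes, and we get the stated bound on the primes appearing in any denominator of any term of the antiderivative.
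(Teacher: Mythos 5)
Your proof is correct and takes essentially the same route as the paper: the identical integration-by-parts identity (with $u=\log(1/(1-x))^c$, $dv=(1-x)^b\,dx$), reducing $c$ by one while keeping $b$ fixed, combined with induction on $c$ and the observation that the only new denominator factor introduced is $b+1$. Your write-up is in fact slightly more careful than the paper's about why cancellation and recombination of like terms cannot introduce new primes into reduced denominators.
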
 

\begin{proof} 
This follows by induction on $c$, the  inital case of $c=0$ being obvious. 
Indeed, integration by parts yields
\begin{equation}\label{partint} 
\begin{aligned}
\int (1-x)^{b} \log \left ( \frac{1}{1-x} \right)^{c} dx & = - \log \left ( \frac{1}{1-x} \right)^{c}  \cdot 
\frac{(1-x)^{b+1}}{b+1}  \\ + & \int \frac{(1-x)^{b}}{b+1} \cdot c \log \left ( \frac{1}{1-x} \right)^{c-1} dx ,
\end{aligned} 
\end{equation}
and the proof is complete. 
\end{proof}
{\bf Note.\/} The equation \eqref{partint} implies
\begin{equation*}
I_{b,c}:=\int_0^1(1-x)^b\log \left ( \frac{1}{1-x} \right)^{c} dx =\frac{1_{\{c=0\}}}{b+1}+\frac{c}{b+1}I_{b,c-1},
\end{equation*}
so iterating the same operation, we obtain
\begin{equation}\label{Ibc}
I_{b,c}=\frac{c!}{(b+1)^{c+1}}.
\end{equation}
\begin{lemma} \label{upperbounds} 
When written in simplest form, no term of $B_k(x)$ has a denominator with a prime divisor larger than $2^{k+1} -1$.
Furthermore,  both the exponent $b_i$ of $(1-x)$ and the exponent $c_i$ of $\log(1/(1-x))$ in the ${\bf PL}$ form of $B_k(x)$ are at most
as large as 
$2^{k+1} -1$.
\end{lemma}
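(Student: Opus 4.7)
My plan is induction on $k$. The base case $k=0$ is trivial since $B_0(x)=x=1-u$, giving exponents $\le 1 = 2^1-1$ and no nontrivial denominator. For the inductive step, assume the lemma holds for $B_0,\dots,B_{k-1}$, so each has $u$- and $v$-exponents and denominator prime-factors bounded by $2^{k}-1$. Rewriting the recurrence \eqref{A'B'kequation} as
\begin{equation*}
B_k'(x) \;=\; \frac{2\,B_{k-1}(x)}{u} \;-\; 2\,B_{k-1}(x)\,B_{\le k-2}(x) \;-\; B_{k-1}(x)^2, \qquad u := 1-x,
\end{equation*}
and expanding in the $(u,v)$-basis, the right-hand side becomes a $\mathbb{Q}$-linear combination of monomials $u^a v^c$ with $a \in \{-1,0,\dots,2^{k+1}-2\}$ (the $a=-1$ contribution arising only from $2B_{k-1}/u$) and $c \in \{0,1,\dots,2^{k+1}-2\}$. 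By the inductive hypothesis each coefficient has denominator with prime factors at most $2^k-1$.

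I would then integrate term by term. For $a \ge 0$, iterating the identity \eqref{partint} gives
\begin{equation*}
\int u^a v^c\, dx \;=\; -u^{a+1}\sum_{j=0}^{c} \frac{c!/(c-j)!}{(a+1)^{j+1}}\, v^{c-j},
\end{equation*}
whose denominator primes divide $a+1 \le 2^{k+1}-1$. The $a=-1$ monomials, not covered by Lemma \ref{induction}, must be handled separately via $dv=-du/u$:
\begin{equation*}
\int u^{-1} v^c\, dx \;=\; \frac{v^{c+1}}{c+1},
\end{equation*}
whose denominator $c+1 \le 2^k \le 2^{k+1}-1$ has only primes within the allowed range. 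Assembling these contributions, and fixing the additive constant by $B_k|_{u=1,\,v=0}=0$ (a $\mathbb{Q}$-linear combination of already-controlled coefficients), every denominator prime of $B_k$ is at most $2^{k+1}-1$.

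The exponent bound follows in parallel. For $a\ge 0$, integration raises the $u$-exponent from $a\le 2^{k+1}-2$ to $a+1\le 2^{k+1}-1$ while leaving the $v$-exponent $\le c \le 2^{k+1}-2$. For $a=-1$ it produces $v^{c+1}$ with $c+1\le 2^k$, so the $v$-exponent still stays within $2^{k+1}-1$. Both exponent bounds thus close, completing the induction.

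\textbf{Main obstacle.} The one step that departs from a mechanical application of Lemma \ref{induction} is the treatment of the $u^{-1}v^c$ terms from $2B_{k-1}/u$; Lemma \ref{induction} assumes $b\ge 0$, so these must be integrated by hand, and the inductive $v$-exponent bound $c\le 2^k-1$ is precisely what is needed to control the resulting denominator $c+1$. This cross-dependence between the denominator-prime bound and the exponent bound is why the two halves of the lemma must be proved in a single joint induction.
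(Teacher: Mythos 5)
Your proof is correct and takes essentially the same route as the paper's: strong induction on $k$ via the recurrence \eqref{A'B'kequation}, splitting the resulting expression for $B_k'(x)$ into the $(1-x)^{-1}$ terms (integrated directly to powers of $\log\tfrac{1}{1-x}$, with denominator $c_i+1$ controlled by the inductive exponent bound) and the $b_i\ge 0$ terms (handled by iterating \eqref{partint}, i.e.\ Lemma \ref{induction}), with each integration raising an exponent by at most one. Your explicit closed form for $\int u^a v^c\,dx$ and the remark about fixing the constant of integration at $x=0$ are somewhat more detailed than the paper's write-up but structurally identical.
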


\begin{proof} We prove the Lemma by stong induction on $k$. It is straightforward to check that $B_0(x)$ and
$B_1(x)$ satisfy both requirements. Now let us assume that  the claims of the Lemma are true for all $B_j(x)$ with $j<k$, and prove 
prove them for $B_k$. Formula (\ref{A'B'kequation}) shows that  
 $B_k'(x)$ is a quadratic form of  $B_i(x)$ with $i<k$ and $(1-x)^{-1}$. Consequently $B_k'(x)$ is of the form
$\sum_{i=1}^{m}  a_i  (1-x)^{b_i} \log \left ( \frac{1}{1-x} \right)^{c_i}$, where $b_i\geq -1$ is an integer, 
while $a_i$ is rational and $c_i$ is a nonnegative integer. Moreover, it follows from \eqref{A'B'kequation} and the induction hypothesis that,
 in the sum representing $B_k'(x)$, both the exponent $b_i$ of $(1-x)$ and the exponent $c_i$ of
 $\log(1/(1-x))$ are at most as large as $2(2^k-1)=
2^{k+1}-2$.

Now the contribution of $\sum_{i:b_i=-1} a_i(1-x)^{b_i}\log(\tfrac{1}{1-x})^{c_i}$ to $B_k(x)$ 
itself is
\[
\sum_{i:b_i=-1}\frac{a_i}{c_i+1}\log\left(\frac{1}{1-x}\right)^{c_i+1},
\]
with $c_i+1\le 2^{k+1}-1$. As for the contribution to $B_k(x)$ of the remaining summands with
$b_i\ge 0$, using Lemma \ref{induction} and by (\ref{partint}), we see that in all the summands 
neither the exponent of $(1-x)$ nor the exponent of $\log\tfrac{1}{1-x}$ can exceed
$2^{k+1}-1$, since integration of the terms with $b_i\ge 0$, $c_i\ge 0$ will increase these exponents by at most one.
 As addition and multiplication of terms will not result in the appearance of a larger prime divisor, the claim for $B_k(x)$ is proved. 
\end{proof}

\begin{proof} (of Conjecture \ref{firstcon})
Recall that (\ref{A'B'kequation}) implies that
\[
c_k=\lim_{x\uparrow 1} (1-x)^2A_k(x)= 2\int_0^1(1-x) B_k(x) \, dx.
\]
Here 
\[
B_k(x)=\sum_ia_i (1-x)^{b_i}\left(\log\frac{1}{1-x}\right)^{c_i},
\]
$0\le b_i,\,c_i\le 2^{k+1}-1$, and no $a_i$ has denominator with a prime divisor larger
than $2^{k+1}-1$. From \eqref{Ibc} it follows then that $c_k$ is
the sum of rational fractions, whose denominators do not have prime divisors exceeding
$2^{k+1}+1$, which is a common upper bound for the largest denominator of $a_i$ and for the largest $b_i+2$. 
\end{proof}

\begin{center}  {\bf Acknowledgement}  \end{center}

The authors are thankful to Frank Garvan who advised them on numerous occasions on how to convince Maple to carry out a difficult task.

\end{document}